\documentclass[a4paper,10pt]{amsart}

\usepackage{mathtools,amssymb,amsfonts,stmaryrd,amsmath,amsthm,enumerate,hyperref,color}
\usepackage[utf8]{inputenc}
\usepackage{enumitem}

\usepackage[all,cmtip]{xy}     
\usepackage{wrapfig,tikz,tikz-cd}
\usetikzlibrary{arrows,decorations.pathreplacing}
\usepackage{caption}
\usepackage{comment}

\usepackage[capitalise]{cleveref}

\usepackage[parfill]{parskip}
\usepackage[margin=1.2in]{geometry}
\AtBeginDocument{\addtocontents{toc}{\protect\setlength{\parskip}{0pt}}}

\newtheorem{theorem}{Theorem}[section]
\newtheorem{lemma}[theorem]{Lemma}
\newtheorem{proposition}[theorem]{Proposition}
\newtheorem{corollary}[theorem]{Corollary}

\theoremstyle{definition}
\newtheorem{definition}[theorem]{Definition}
\newtheorem{remark}[theorem]{Remark}
\newtheorem{example}[theorem]{Example}

\theoremstyle{remark}

\newcommand{\Z}{\mathbb{Z}}
\newcommand{\R}{\mathbb{R}}
\newcommand{\F}{\mathbb{F}}
\DeclareMathOperator{\rk}{rk}
\DeclareMathOperator{\lk}{lk}

\begin{document}

\title[Equivariant formality and subgroups of right-angled Coxeter groups]{Equivariant formality and the cohomology of subgroups of right-angled Coxeter groups} 

\date{}

\author[S.~Amelotte]{Steven Amelotte}
\address{School of Mathematics and Statistics, 4302 Herzberg Laboratories, Carleton University, Ottawa, ON, K1S 5B6, Canada}
\email{steven.amelotte@carleton.ca} 

\author[V.~Gorchakov]{Vladimir Gorchakov}
\address{Department of Mathematics, University of Western Ontario, London, ON, N6A 5B7, Canada
}
\email{vyugorchakov@gmail.com}

\keywords{Right-angled Coxeter groups, real moment-angle complexes, equivariant formality, equivariant cohomology}
\subjclass[2020]{20F55, 20J06, 55N91, 55R35, 57S12, 57S17, 57M07}
% 20J06 = Cohomology of groups
% 57M07 = Topological methods in group theory
% 57S17 = Finite transformation groups
% 55R35 = Classifying spaces of groups and $H$-spaces

\begin{abstract}
We construct models for the classifying spaces of coabelian subgroups of right-angled Coxeter groups as homotopy orbit spaces of real moment-angle complexes, generalizing well-known models for the classifying space of a right-angled Coxeter group and its commutator subgroup. This identifies the cohomology of these groups with the Borel equivariant cohomology of elementary abelian $2$-group actions on cubical subcomplexes of a cube $[-1,1]^m$. We then characterize equivariant formality for these actions, leading to a simple graph-theoretic criterion for when the cohomology of a coabelian subgroup is free as a module over the cohomology of the quotient by the commutator subgroup of the right-angled Coxeter group.
\end{abstract}

\maketitle

\section{Introduction}

Given a simple graph $\Gamma$ on the vertex set $[m]=\{1,\ldots,m\}$ with edge set $E(\Gamma)$, the \emph{right-angled Coxeter group} $W_\Gamma$ associated to $\Gamma$ is a classical object of study arising in combinatorial and geometric group theory defined by the presentation
\[
W_\Gamma= \left\langle g_1,\ldots,g_m \,:\, g_i^2=1 \text{ for all } i\in [m] \text{ and } [g_i,g_j]=1 \text{ for } \{i,j\}\in E(\Gamma) \right\rangle.
\]
Unlike the right-angled Artin group $A_\Gamma$, which admits a finite classifying space in the form of a subcomplex of the $m$-torus $T^m$ by a result of Charney and Davis~\cite{CD}, $W_\Gamma$ contains torsion, and hence is not of finite cohomological dimension. It is, however, of finite virtual cohomological dimension (meaning it contains a finite-index subgroup of finite cohomological dimension), as witnessed by the commutator subgroup $[W_\Gamma,W_\Gamma]$, which admits a finite classifying space in the form of a subcomplex of the $m$-cube $[-1,1]^m$.

More precisely, classifying spaces of each of the groups above can be obtained as special cases of a common construction at the foundations of toric topology known as the polyhedral product functor. For each pair of spaces $(X,A)$ and simplicial complex~$K$ on~$m$ vertices, this functor carves out a topological space $(X,A)^K$ from the cartesian product $X^m$ (see \cref{subsec_realMACs}). In particular, the \emph{real moment-angle complex} $\R\mathcal{Z}_K=(D^1,S^0)^K$ and the \emph{real Davis--Januszkiewicz space} $\R DJ_K=(\R P^\infty,\, pt)^K$ are well-known models for the classifying spaces of~$[W_\Gamma,W_\Gamma]$ and~$W_\Gamma$, respectively, when~$K$ is the flag simplicial complex with underlying graph~$K^{(1)}=\Gamma$ (see~\cite{PV} or \cref{thm_aspherical} below); here $D^1=[-1,1]$ and $S^0=\partial D^1$.

By definition, $\R\mathcal{Z}_K$ is a cubical subcomplex of the cube $(D^1)^m$, from which it inherits a canonical action of the elementary abelian $2$-group $\Z_2^m$ by coordinatewise reflection. The homotopy orbits (i.e., Borel construction) of this $\Z_2^m$-space is homotopy equivalent to the real Davis--Januszkiewicz space: $(\R\mathcal{Z}_K)_{\Z_2^m} = E\Z_2^m \times_{\Z_2^m} \R\mathcal{Z}_K \simeq \R DJ_K$. Thus, while the right-angled Coxeter group $W_\Gamma$ does not admit a finite classifying space, it does admit one which is the homotopy quotient of a finite $\Z_2^m$-invariant subcomplex of $(D^1)^m$. In this paper, we observe that the same is true of all coabelian subgroups $G$ of $W_\Gamma$ (\cref{prop_corresp}) and use this to study the cohomology of these groups as modules over the polynomial ring given by the cohomology of the quotient $G/[W_\Gamma,W_\Gamma]$.

Note that if $G$ is a subgroup of $W_\Gamma$ containing the commutator subgroup, then $G/[W_\Gamma,W_\Gamma]$ is a subgroup of the abelianization $W_\Gamma/[W_\Gamma,W_\Gamma]=\Z_2^m$. If $K$ is a simplicial complex with underlying graph $\Gamma$, then by considering restricted actions of subgroups of $\Z_2^m$ on the real moment-angle complex $\R\mathcal{Z}_K$, we establish a bijection
\begin{align*}
\Bigl\{ \substack{\text{homotopy quotients of}\\ \R\mathcal{Z}_K\text{by subgroups of }\Z_2^m} \Bigr\} \;&\longleftrightarrow\; \Bigl\{ \substack{\text{coabelian subgroups}\\ \text{of the RACG }W_\Gamma} \Bigr\} \\
\text{\small$(\R\mathcal{Z}_K)_A$} \quad&\longmapsto\quad \text{\small$\pi_1\bigl( (\R\mathcal{Z}_K)_A \bigr)$} \\
\text{\small$(\R\mathcal{Z}_K)_{G/[W_\Gamma,W_\Gamma]}$} \;\;&\longmapsfrom\quad \text{\small$G$}
\end{align*}
where the right-to-left assignment can be identified with the classifying space functor when $K$ is flag. Here $\pi_1\bigl( (\R\mathcal{Z}_K)_A \bigr)$ can be viewed as a subgroup of $W_\Gamma$ via the injection of fundamental groups induced by a natural fibre inclusion $(\R\mathcal{Z}_K)_A \to \R DJ_K$ (see~\eqref{two_diagrams} or the proof of \cref{prop_corresp}). In case the subgroup $A$ of $\Z_2^m$ acts freely on the real moment-angle complex, $\R\mathcal{Z}_K/A \simeq (\R\mathcal{Z}_K)_A$ is called a \emph{real toric space}. These quotients include all real toric manifolds (i.e., real loci of nonsingular complete toric varieties) and small covers, more generally~\cite{DJ}, and correspond under the bijection above to subgroups of $W_\Gamma$ of finite cohomological dimension.

Our main result is the following combinatorial classification of coabelian subgroups $G$ of $W_\Gamma$ for which $\mathrm{H}^*(G;\F_2)$ is free over the polynomial ring $\mathrm{H}^*(G/[W_\Gamma,W_\Gamma];\F_2)$.

\begin{theorem} \label{main_thm_1}
Let $\Gamma$ be a simple graph on the vertex set $[m]$, and let $G$ be a coabelian subgroup of $W_\Gamma$. Let $I\subseteq [m]$ be the minimal subset for which $\Z_2^I$ contains the subgroup $G/[W_\Gamma,W_\Gamma]$ of $W_\Gamma/[W_\Gamma,W_\Gamma] = \Z_2^m$ (see \cref{def_hull}). Then the following conditions are equivalent:
\begin{enumerate}[label={\normalfont(\alph*)}]
\item\label{main_thm_1a} $\mathrm{H}^*(G;\F_2)$ is a free module over the polynomial ring $\mathrm{H}^*(G/[W_\Gamma,W_\Gamma];\F_2)$;
\item\label{main_thm_1b} $\mathrm{H}^*(G^\rtimes;\F_2)$ is a free module over $\mathrm{H}^*(G^\rtimes/[W_\Gamma,W_\Gamma];\F_2) \cong \F_2[x_i : i\in I]$, where
\[ G^\rtimes = \overline{W}_{\Gamma_I} \rtimes [W_{\Gamma_J},W_{\Gamma_J}], \quad J=[m]\setminus I; \]
\item\label{main_thm_1c} the induced subgraph $\Gamma_I$ is complete and $\{i,j_1\}, \{i,j_2\} \in E(\Gamma)$ for every non-edge $\{j_1,j_2\}$ of $\Gamma$ and every $i\in I\setminus\{j_1,j_2\}$.
\end{enumerate}
\end{theorem}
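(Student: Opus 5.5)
Via \cref{prop_corresp}, I would pass from group cohomology to Borel equivariant cohomology. Let $A=G/[W_\Gamma,W_\Gamma]\le\Z_2^m$ and take $K$ to be the flag complex of $\Gamma$. Then $BG\simeq(\R\mathcal{Z}_K)_A$, and $\mathrm{H}^*(BA;\F_2)=\mathrm{H}^*(G/[W_\Gamma,W_\Gamma];\F_2)$ acts through the Borel fibration $\R\mathcal{Z}_K\to(\R\mathcal{Z}_K)_A\to BA$. So condition \ref{main_thm_1a} says exactly that the $A$-action on $\R\mathcal{Z}_K$ is equivariantly formal over $\F_2$. By the standard criterion for elementary abelian $2$-groups (Allday--Puppe), this is equivalent to surjectivity of $\mathrm{H}^*_A(\R\mathcal{Z}_K)\to\mathrm{H}^*(\R\mathcal{Z}_K)$. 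It is also equivalent to the equality $\dim\mathrm{H}^*(\R\mathcal{Z}_K^A;\F_2)=\dim\mathrm{H}^*(\R\mathcal{Z}_K;\F_2)$, since $\R\mathcal{Z}_K$ is a finite complex. I would use the fixed-point version, because both sides are combinatorially computable.

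\textbf{Reduction to the coordinate subgroup.} Fixed points of $A$ on the cube are determined coordinatewise. A point is fixed by $A$ iff its $i$-th coordinate is $0$ for every $i$ in the support of some element of $A$, that is, for all $i\in I$. Hence $\R\mathcal{Z}_K^A=\R\mathcal{Z}_K^{\Z_2^I}$. Writing $\R\mathcal{Z}_K^{\Z_2^I}$ explicitly, it is nonempty only if $I\in K$; in that case it equals $\R\mathcal{Z}_{\lk_K I}$ on the vertex set $J$, restricted to vertices in the link. The total Betti numbers do not depend on $A$. Therefore \ref{main_thm_1a} is equivalent to equivariant formality of the $\Z_2^I$-action. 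By \cref{prop_corresp}, the $\Z_2^I$-action corresponds to the coabelian subgroup whose image in $\Z_2^m$ is $\Z_2^I$, and I would identify this subgroup with $G^\rtimes=\overline{W}_{\Gamma_I}\rtimes[W_{\Gamma_J},W_{\Gamma_J}]$. This gives \ref{main_thm_1a}$\Leftrightarrow$\ref{main_thm_1b}, together with the isomorphism $\mathrm{H}^*(B\Z_2^I)=\F_2[x_i:i\in I]$.

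\textbf{Combinatorial criterion.} For \ref{main_thm_1b}$\Leftrightarrow$\ref{main_thm_1c}, I would compare total Betti numbers using Hochster-type formulas. These give $\dim\mathrm{H}^*(\R\mathcal{Z}_K)=\sum_{U\subseteq[m]}\dim\tilde{\mathrm{H}}^{*}(\Sigma K_U)$, where $\Sigma K_U$ is the suspension of the full subcomplex (the empty-$U$ term counts $1$). The fixed set contributes the analogous sum over $U\subseteq\lk_K I$ for the link. The Smith inequality $\dim\mathrm{H}^*(X^A)\le\dim\mathrm{H}^*(X)$ holds automatically, so equality has to be characterized. If $I\notin K$, i.e.\ $\Gamma_I$ is not complete, the fixed set is empty and formality fails, since the total Betti number is at least $1$. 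Assuming $I\in K$, the link of $I$ in a flag complex is the full subcomplex on the common neighbours $N$ of $I$. Then the fixed-point count is $\sum_{U\subseteq N}\dim\tilde{\mathrm{H}}^*(\Sigma K_U)$, and the full sum over $U\subseteq[m]$ must add nothing. So equality holds iff $\tilde{\mathrm{H}}^*(K_U)=0$ (reduced homology with the appropriate convention, $U\ne\emptyset$) for every $U\not\subseteq N$.

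The condition in \ref{main_thm_1c} is precisely that every vertex outside $N$ is in $I$, and is adjacent to every endpoint of every non-edge it is not part of. If $U$ contains such a vertex $i$, then $i$ is adjacent to all of $U\setminus\{i\}$, with the edge case handled separately. Hence $K_U$ is a cone and acyclic. For the converse, a violating non-edge $\{j_1,j_2\}$ and vertex $i\in I$ give $U=\{i,j_1,j_2\}$ or $\{i,j\}$, whose full subcomplex is disconnected, producing nonzero $\tilde{\mathrm{H}}^0$.

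\textbf{Main obstacle.} I expect the delicate steps to be, first, the precise identification of the $\Z_2^I$ homotopy quotient with $BG^\rtimes$ and the correct freeness criterion for elementary abelian $2$-groups. Second, the bookkeeping showing that vertices of $J$ outside $N$ are excluded by the stated condition, and that a vertex $j\in J$ non-adjacent to some $i\in I$ violates the condition via a non-edge $\{i,j\}$.
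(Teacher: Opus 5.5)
Your reduction of \ref{main_thm_1a} to the coordinate subgroup $\Z_2^I$ matches the paper's argument: fixed points are determined coordinatewise, and you then apply the Betti-sum criterion. So does your identification of $(\R\mathcal{Z}_K)_{\Z_2^I}$ with $BG^\rtimes$. The gap is in your direct combinatorial proof of \ref{main_thm_1b}$\Leftrightarrow$\ref{main_thm_1c}, which rests on two false claims.

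First, the fixed set is miscounted. A point with $x_i=0$ for all $i\in I$ lies in some $(D^1,S^0)^\sigma$ with $I\subseteq\sigma\in K$. So each $j\in J\setminus N$ is forced into $S^0$, but it still contributes a factor: $(\R\mathcal{Z}_K)^{\Z_2^I}\cong\R\mathcal{Z}_{K_N}\times(S^0)^{J\setminus N}$. The link lives on the vertex set $J$ with ghost vertices, as in \cref{lem_coordfixedpts}. Its Betti sum is therefore $2^{|J\setminus N|}\sum_{U\subseteq N}\dim\widetilde{H}^*(\Sigma K_U)$. Formality is thus not the termwise statement ``$\widetilde{H}^*(K_U)=0$ for all $U\not\subseteq N$''.

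Second, \ref{main_thm_1c} does not force $N=J$. A non-edge $\{i,j\}$ with $i\in I$ only constrains the vertices of $I\setminus\{i,j\}$. Your claim that a $j\in J$ non-adjacent to $i$ violates \ref{main_thm_1c} is therefore false.

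The smallest example exhibits both failures. Take $\Gamma$ to be two non-adjacent vertices and $I=\{1\}$.
\begin{itemize}
\item Condition \ref{main_thm_1c} holds vacuously.
\item $\R\mathcal{Z}_K=\partial[-1,1]^2\cong S^1$, and the reflection fixes the two points $(0,\pm1)$, so the action is formal ($2=2$).
\item Yet $N=\varnothing$, $2\notin I$, and $U=\{1,2\}\not\subseteq N$ has $K_U=S^0$, which is not acyclic.
\end{itemize}
Given $I\in K$, your intermediate condition is equivalent to $K=\Delta^I\ast K_J$, which is strictly stronger than both formality and \ref{main_thm_1c}. The two errors cancel at the endpoints, but neither implication in your chain holds. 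In particular, for the converse you may not conclude non-formality from a single non-acyclic $K_U$ with $U\not\subseteq N$.

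There are two ways to repair this.
\begin{itemize}
\item Follow the paper's route.
  \begin{itemize}
  \item Realize $\R\mathcal{Z}_K$ as the fixed set of complex conjugation on $\mathcal{Z}_K$.
  \item Note that conjugation is formal on $\mathcal{Z}_K$ and on $(\mathcal{Z}_K)^{T^I}\cong\mathcal{Z}_{\lk_K I}$, since the Hochster formulas give equal Betti sums.
  \item Use \cref{thm_compatiblecrit} to transfer formality between $\Z_2^I$ on $\R\mathcal{Z}_K$ and $T^I$ on $\mathcal{Z}_K$.
  \item Quote \cite[Theorem~C]{AB}.
  \end{itemize}
\item Keep a direct argument, but with the correct count.
  \begin{itemize}
  \item For flag $K$, condition \ref{main_thm_1c} is equivalent to $K=\ast_{i\in I}(\{i\}\sqcup\Delta^{C_i})\ast K_N$. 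Here $C_i$ is the set of non-neighbours of $i$, and these sets are necessarily disjoint cliques.
  \item Then $\R\mathcal{Z}_K$ is a product of factors. On the factor for $i$, the reflection has $2^{|C_i|}$ fixed points, matching its Betti sum $2^{|C_i|}$. This gives formality.
  \item Conversely, a failure of \ref{main_thm_1c} can be detected on the $\Z_2^{I\cap U}$-equivariant retract $\R\mathcal{Z}_{K_U}$, with $U=\{i,j_1,j_2\}$. There one checks directly that the fixed set has strictly smaller Betti sum.
  \end{itemize}
\end{itemize}
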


Here $\overline{W}_{\Gamma_I}$ denotes the normal closure of the standard parabolic subgroup $W_{\Gamma_I}$ of $W_\Gamma$; classifying spaces of these (non-right-angled) Coxeter groups can be described in terms of real Davis--Januszkiewicz spaces (\cref{lem_coord_quot}). The equivalent conditions above imply the collapse of the Lyndon--Hochschild--Serre spectral sequence associated to the short exact sequence of groups 
\[ 1\to [W_\Gamma,W_\Gamma] \to G \to G/[W_\Gamma,W_\Gamma] \to 1, \]
which can be identified with the Serre spectral sequence of the Borel fibration
\[ \R\mathcal{Z}_K\to (\R\mathcal{Z}_K)_A\to BA \]
when $K$ is flag and $A=G/[W_\Gamma,W_\Gamma]$, so that $(\R\mathcal{Z}_K)_A\simeq BG$. The action of a subgroup $A\leqslant\Z_2^m$ on $\R\mathcal{Z}_K$ is called \emph{equivariantly formal} when the equivariant cohomology $H_A^*(\R\mathcal{Z}_K;\F_2)$ is free as a module over the polynomial ring $H^*(BA;\F_2)$, or equivalently, when $A$ acts trivially on $H^*(\R\mathcal{Z}_K;\F_2)$ and the Serre spectral sequence of the fibration above degenerates at the second page (see \cref{thm_bettisumcrit}). The analogous problem of classifying equivariantly formal torus actions on ordinary moment-angle complexes $\mathcal{Z}_K=(D^2,S^1)^K$ was solved by the first author and Briggs in~\cite{AB}, and here we obtain \cref{main_thm_1} by proving the following real analogue of~\cite[Theorem~C]{AB}.

\begin{theorem} \label{main_thm_2}
Let $K$ be a flag complex on the vertex set $[m]$, and let $A$ be a subgroup of $\Z_2^m$. Let $I\subseteq [m]$ be the minimal subset for which the coordinate subgroup $\Z_2^I$ contains $A$ (see \cref{def_hull}). Then the following conditions are equivalent:
\begin{enumerate}[label={\normalfont(\alph*)}]
\item\label{main_thm_2a} $\R\mathcal{Z}_K$ is an equivariantly formal $A$-space over $\F_2$;
\item\label{main_thm_2b} $\R\mathcal{Z}_K$ is an equivariantly formal $\Z_2^I$-space over $\F_2$;
\item\label{main_thm_2c} $I\in K$ and $\{i,j_1\}, \{i,j_2\} \in K$ for every missing edge $\{j_1,j_2\} \notin K$ and every $i\in I\setminus\{j_1,j_2\}$.
\end{enumerate}
\end{theorem}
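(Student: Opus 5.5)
We plan to prove the cycle (a)$\Leftrightarrow$(b) and (b)$\Leftrightarrow$(c). The tool throughout is the Betti sum criterion (\cref{thm_bettisumcrit}): a $\Z_2^r$-space $X$ is equivariantly formal over $\F_2$ if and only if $\dim H^*(X;\F_2)=\dim H^*(X^{A};\F_2)$, with $X^{A}$ the fixed point set. For $\R\mathcal{Z}_K$ fixed points are easy to compute. A point of $(D^1)^m$ is fixed by $A$ exactly when its coordinates in $\mathrm{supp}(A)=I$ are $0$, and $I$ is the union of supports of elements of $A$ by minimality. Hence
\[ (\R\mathcal{Z}_K)^A=(\R\mathcal{Z}_K)^{\Z_2^I}, \]
and this is empty unless $I\in K$. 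When $I\in K$ it is homeomorphic to $\R\mathcal{Z}_{\lk_K I}$, since the coordinates in $I$ are pinned at $0$ and the remaining ones form the polyhedral product over the link. The spaces $\R\mathcal{Z}_K$ in (a) and (b) are the same, so the Betti sum criterion gives (a)$\Leftrightarrow$(b) immediately. If $I\notin K$, both actions are fixed-point-free and hence not equivariantly formal, since $\dim H^*\ge 1>0$. (For $A$ trivial, $I=\emptyset$ and everything holds vacuously, because $\lk\emptyset=K$.)

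For (b)$\Leftrightarrow$(c), assume $I\in K$. We compare $\dim H^*(\R\mathcal{Z}_K)$ with $\dim H^*(\R\mathcal{Z}_{\lk_K I})$ using Hochster's formula for real moment-angle complexes:
\[ \tilde H^{*}(\R\mathcal{Z}_K)\cong\bigoplus_{J\subseteq[m]}\tilde H^{*-1}(K_J). \]
The full subcomplexes of $L=\lk_K I$ are of the form $L_J=\lk_{K_{J\cup I}}I$ for $J\subseteq[m]\setminus I$. Since $K$ is flag, $L$ is the full subcomplex on the vertices adjacent to all of $I$. We will argue that the inequality $\dim H^*(\R\mathcal{Z}_L)\le \dim H^*(\R\mathcal{Z}_K)$ holds in general, either from Smith theory or directly from the decomposition. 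Equality then requires $\tilde H^*(K_J)=0$ for every $J$ not contained in $V(L)$, apart from the summands matched with those of $L$.

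\emph{Necessity.} Suppose (c) fails. Then there is a missing edge $\{j_1,j_2\}$ and some $i\in I\setminus\{j_1,j_2\}$ with, say, $\{i,j_1\}\notin K$. Then $j_1\notin V(L)$, and $K_{\{j_1,j_2\}}=S^0$ contributes a class to $H^*(\R\mathcal{Z}_K)$. We need to show this creates a strict excess. The natural choice is the full subcomplex on $\{j_1,j_2\}$ or on $\{i,j_1\}$; note that $\{i,j_1\}$ is itself a missing edge containing a vertex of $I$. The cleaner formulation is that $\R\mathcal{Z}_{K_{\{i,j_1\}}}$, a circle on which $g_i$ acts by reflection, is a retract that is not formal. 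We then use that equivariant formality passes to full subcomplexes, since $\R\mathcal{Z}_{K_J}$ is an equivariant retract of $\R\mathcal{Z}_K$ and the restriction is split surjective on Borel cohomology. This reduces necessity to small cases on at most three vertices $\{i,j_1,j_2\}$, which can be checked by hand. In those cases $I\cap\{i,j_1,j_2\}$ may be $\{i\}$ or may include $j$'s; if $j_1\in I$, the condition $I\in K$ already forces adjacency.

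\emph{Sufficiency.} Assume (c). Then every missing edge $\{j_1,j_2\}$ has both endpoints adjacent to all of $I\setminus\{j_1,j_2\}$. Since $I$ is a simplex, a missing edge meets $I$ in at most one vertex. The main obstacle is to show that every full subcomplex $K_J$ with nonzero reduced cohomology has $J\subseteq V(L)\cup I$ in a way that matches Hochster summands bijectively. We would instead show that $K=\Delta^{I'}*K'$ up to cone-like structure, where $I'$ is the set of vertices of $I$ lying in no missing edge. Then $\R\mathcal{Z}_K\cong (D^1)^{I'}\times\R\mathcal{Z}_{K'}$, and the remaining vertices of $I$ act on the factor $\R\mathcal{Z}_{K'}$. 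Condition (c) should make each such vertex $i$ adjacent to all other vertices except its non-neighbours $j$, and $i$ would then act on $\R\mathcal{Z}_{K'}$ homotopically trivially with fixed set carrying the full Betti sum. We would verify this by computing the Betti sums directly via Hochster's formula, checking that $K_J$ and $\lk$-restricted pieces correspond under $J\mapsto J\setminus I$ for the relevant $J$.
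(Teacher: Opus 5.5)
Your (a)$\Leftrightarrow$(b) is the paper's argument. For (b)$\Leftrightarrow$(c), a direct Hochster/Betti-sum route is a legitimate alternative to the paper's, which moves to the $T^I$-action on $\mathcal{Z}_K$ via complex conjugation and \cref{thm_compatiblecrit} and then quotes \cite[Theorem~C]{AB}. As written, however, both directions of your (b)$\Leftrightarrow$(c) have gaps.

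\emph{Necessity.} Your ``cleaner formulation'' is false. $\R\mathcal{Z}_{K_{\{i,j_1\}}}$ is the boundary of the square, and the reflection in $x_i$ fixes the two points $(0,\pm1)$. Since $\dim_{\F_2}H^*(S^0;\F_2)=2=\dim_{\F_2}H^*(S^1;\F_2)$, this action \emph{is} equivariantly formal. Consistently, two disjoint vertices with $I=\{i\}$ satisfy (c), because $I\setminus\{i,j_1\}=\varnothing$. So a missing edge through a vertex of $I$ is never an obstruction on its own, and $\{j_1,j_2\}$ alone is useless since $I\cap\{j_1,j_2\}$ may be empty. The error probably comes from dropping ghost vertices. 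The fixed set is $\R\mathcal{Z}_{\lk_K I}$ on the vertex set $[m]\setminus I$, which for flag $K$ is $\R\mathcal{Z}_{K_V}\times(S^0)^{[m]\setminus(I\cup V)}$, with $V$ the set of common neighbours of $I$; it is not just $\R\mathcal{Z}_{K_V}$. The fix is the three-vertex check you postpone, done for the subgroup $\Z_2^{\{i\}}$. This works because formality passes to subgroups, and the inclusion of $\R\mathcal{Z}_{K_J}$ is in general only $\Z_2^{I\cap J}$-equivariant, not $\Z_2^I$-equivariant. For $J=\{i,j_1,j_2\}$:
if $\{i,j_2\}\in K$, the Betti sums are $4$ versus $2$ (fixed set $D^1\times S^0$);
otherwise they are $6$ versus $4$ (fixed set $S^0\times S^0$).

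\emph{Sufficiency.} This direction is not proved. What you call the main obstacle is false under (c): for $K$ two disjoint vertices $i,v$ and $I=\{i\}$, $K_{\{i,v\}}$ has $\widetilde{H}^0\neq 0$ although $v\notin V\cup I$. Also, ``(c) should make each such $i$ adjacent to all other vertices except its non-neighbours'' is a tautology. The missing idea is a structural lemma: if $I\in K$ and (c) holds, then a non-neighbour $v$ of $i\in I$ has $i$ as its \emph{only} non-neighbour. Indeed, a missing edge $\{v,w\}$ with $w\neq i$ has $i\in I\setminus\{v,w\}$, so (c) would force $\{i,v\}\in K$. Hence the non-neighbour sets $N_i$ ($i\in I$) are pairwise disjoint cliques, each adjacent to every vertex except $i$. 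Flagness then gives $K=\Delta^{I'}\ast L_1\ast\cdots\ast L_s\ast K_V$, where $i_1,\ldots,i_s$ are the vertices of $I\setminus I'$ (your $I'$) and $L_t=\{i_t\}\sqcup\Delta^{N_{i_t}}$. Thus $\R\mathcal{Z}_K$ is a $\Z_2^I$-equivariant product, and by K\"unneth it suffices to check each factor:
\begin{itemize}
\item $(D^1)^{I'}$ has a single fixed point;
\item $\R\mathcal{Z}_{L_t}$ is two $|N_{i_t}|$-cubes joined by $2^{|N_{i_t}|}$ segments, so its Betti sum is $2^{|N_{i_t}|}$, and its fixed set $\{0\}\times(S^0)^{N_{i_t}}$ has the same Betti sum;
\item $\Z_2^I$ acts trivially on $\R\mathcal{Z}_{K_V}$.
\end{itemize}
With this lemma your route becomes a self-contained elementary proof that avoids \cite{AB} and the torus comparison. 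Without it, the sufficiency direction is only a statement of intent.
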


We remark that~\ref{main_thm_2c} above is equivalent to the condition that $I\in K$ and $\Z_2^I$ acts trivially on $H^1(\R\mathcal{Z}_K;k) \cong \mathrm{H}^1([W_\Gamma,W_\Gamma];k)$. (This can be seen by describing the $\Z_2^m$-action in terms of a Hochster formula description of the cohomology of $\R\mathcal{Z}_K$, exactly as in~\cite[Lemma~3.5]{AB}.)
In the non-flag case, $\Z_2^I$ acting trivially on $H^1(\R\mathcal{Z}_K;k)$ does not imply the action is trivial on $H^*(\R\mathcal{Z}_K;k)$, and the latter may depend on the characteristic of the field $k$, making a purely combinatorial characterization impossible. See \cref{prop_ef_hull} and \cref{thm_ef} for a characterization of equivariant formality for real moment-angle complexes when $K$ is not necessarily flag.

The $\Z_2^m$-space $\R\mathcal{Z}_K$ will be our main tool throughout the paper, and consequently, as in~\cite{D}, we work primarily with the (equivariant) cohomology of spaces rather than the cohomology of groups,\footnote{To help avoid confusion, we use the notation $H^*(-;k)$ and $\mathrm{H}^*(-;k)$ to distinguish between the cohomology of spaces and groups, respectively.} and with simplicial complexes rather than graphs. By specializing to the case of a flag simplicial complex $K$, we recover the cohomology of groups associated to the underlying graph $\Gamma=K^{(1)}$ given by the $1$-skeleton of $K$. Other tools used heavily throughout include a criterion for equivariant formality in terms of a generalized Smith--Thom inequality $\dim_{\mathbb{F}_2}H^*(X^G;\mathbb{F}_2) \leqslant \dim_{\mathbb{F}_2}H^*(X;\mathbb{F}_2)$ for a $G$-space $X$ with fixed points $X^G$ (\cref{thm_bettisumcrit}), and a result of some independent interest relating the equivariant formality of $T^r$- and $\Z_2^r$-actions coming from a $T^r$-space with a compatible involution (\cref{thm_compatiblecrit}). Both are derived from the localization theorem in an appendix.

The paper is organized as follows. In \cref{sec_2} we review the construction of real moment-angle complexes and use them to construct models for classifying spaces of coabelian subgroups of RACGs. In particular, the semidirect product groups appearing in \cref{main_thm_1}\ref{main_thm_1b} are shown to admit classifying spaces given by homotopy quotients by the coordinate subgroups $\Z_2^I$ of $\Z_2^m$. In \cref{sec_3} we classify equivariantly formal actions of elementary abelian $2$-groups on $\R\mathcal{Z}_K$, proving \cref{main_thm_2} by first reducing the problem to the case of coordinate subgroup actions and then extending a combinatorial criterion from~\cite{AB} to our setting. In \cref{sec_4} we derive \cref{main_thm_1} from \cref{main_thm_2}. We delay until \cref{app_A} a number of general results on equivariant formality over~$\F_2$ which we use heavily throughout the paper or could not find in the literature.

\subsection*{Acknowledgements}
The authors are thankful to Matthias Franz for helpful discussions regarding the localization theorem (\cref{thm_localization}). The first author was partially supported by the Fields Institute for Research in Mathematical Sciences.

\section{Classifying spaces of subgroups of RACGs via real moment-angle complexes} \label{sec_2}

In this section we use a fundamental construction from toric topology to describe models for the classifying spaces of coabelian subgroups of right-angled Coxeter groups. Generalizing well-known models for the classifying spaces of right-angled Coxeter groups and their commutator subgroups, this identifies the group cohomology of these subgroups with the equivariant cohomology of certain elementary abelian $2$-group actions on cubical subcomplexes of a cube $[-1,1]^m$.

\subsection{Real moment-angle complexes} \label{subsec_realMACs}

Let $K$ be an abstract simplicial complex on the vertex set $[m]=\{1,\ldots,m\}$, and let $(\boldsymbol{X},\boldsymbol{A})=\left\{ (X_1,A_1),\ldots,(X_m,A_m) \right\}$ be a sequence of pairs of topological spaces $A_i\subseteq X_i$. For any $\sigma\subseteq [m]$, we write $(\boldsymbol{X},\boldsymbol{A})^\sigma$ for the subspace of the cartesian product $\prod_{i=1}^m X_i$ given by
\[
(\boldsymbol{X},\boldsymbol{A})^\sigma=\big\{ (x_1,\ldots,x_m)\in \textstyle\prod_{i=1}^mX_i \,:\, x_i \in A_i \text{ for } i\notin\sigma \big\}.
\]
The \emph{polyhedral product} $(\boldsymbol{X},\boldsymbol{A})^K$ is then the subspace of $\prod_{i=1}^m X_i$ defined by the colimit
\[
(\boldsymbol{X},\boldsymbol{A})^K=\operatorname*{colim}_{\sigma\in K}\, (\boldsymbol{X},\boldsymbol{A})^\sigma = \bigcup_{\sigma\in K}(\boldsymbol{X},\boldsymbol{A})^\sigma.
\]
When $(X_i,A_i)=(X,A)$ for all $i\in[m]$, we denote $(\boldsymbol{X},\boldsymbol{A})^K$ simply by $(X,A)^K$.

Key examples of polyhedral products in toric topology include the \emph{moment-angle complex} $\mathcal{Z}_K=(D^2,S^1)^K$ and the \emph{Davis--Januszkiewicz space} $DJ_K=(\mathbb{C}P^\infty,\, pt)^K$. (Here $D^2=\{z\in\mathbb{C}:|z|\leqslant 1\}$ and $S^1=\partial D^2$.) These spaces fit in a homotopy fibration
\[
\mathcal{Z}_K \longrightarrow DJ_K \longrightarrow BT^m,
\]
which can be identified up to homotopy with the Borel fibration of the natural $T^m=(S^1)^m$-action on $\mathcal{Z}_K\subseteq (D^2)^m$ defined by coordinatewise multiplication; see~\cite[Theorem~4.3.2]{BP}.

Real analogues of these spaces are given by the \emph{real moment-angle complex} $\R\mathcal{Z}_K=(D^1,S^0)^K$ and the \emph{real Davis--Januszkiewicz space} $\R DJ_K=(\R P^\infty,\, pt)^K$. (Here $D^1=[-1,1]$ and $S^0=\partial D^1$.) By definition, $\R\mathcal{Z}_K$ is a cubical subcomplex of the $m$-dimensional cube $(D^1)^m$, and the action of the elementary abelian $2$-group $\Z_2^m$ on the cube by coordinatewise reflection restricts to an action on~$\R\mathcal{Z}_K$. As above, these spaces fit in a homotopy fibration
\begin{equation} \label{main_fib}
\R\mathcal{Z}_K \longrightarrow \R DJ_K \longrightarrow B\Z_2^m,
\end{equation}
which is a polyhedral product model for the Borel fibration of the $\Z_2^m$-action on $\R\mathcal{Z}_K$.

Thus for every simplicial complex $K$ on $m$ vertices, the real moment-angle complex $\R\mathcal{Z}_K$ is naturally a $\Z_2^m$-space. Given any subgroup $A$ of $\Z_2^m$, the \emph{homotopy quotient} (or \emph{homotopy orbit space}) $(\R\mathcal{Z}_K)_A$ will always refer to the Borel construction
\[ 
(\R\mathcal{Z}_K)_A \coloneqq EA \times_A \R\mathcal{Z}_K 
\] 
of the restricted action of $A$ on $\R\mathcal{Z}_K$, which is the total space of the Borel fibration $\R\mathcal{Z}_K \to (\R\mathcal{Z}_K)_A \to BA$. 

Since any subgroup $A$ of $\Z_2^m$ is itself an elementary abelian $2$-group, we have $A\cong \Z_2^r$ for some $r\leqslant m$, and the equivariant cohomology $H_A^*(\R\mathcal{Z}_K;\F_2) \coloneqq H^*((\R\mathcal{Z}_K)_A;\F_2)$ is a graded module over the polynomial ring $H^*(BA;\F_2)\cong \F_2[x_1,\ldots,x_r]$, with $|x_i|=1$, via the map induced by the projection $(\R\mathcal{Z}_K)_A \to BA$.

The following summarizes some well-known results on the fundamental group and asphericity of real moment-angle complexes and real Davis--Januszkiewicz spaces. Recall that a simplicial complex $K$ is called a \emph{flag complex} if every set of pairwise connected vertices spans a face of $K$. Equivalently, $K$ is flag if it is the clique complex of the underlying graph $\Gamma=K^{(1)}$ given by the $1$-skeleton of $K$. 

\begin{theorem}[{\cite[Corollary~3.4]{PV}}; see also \cite{DJ},\cite{PRV}] \label{thm_aspherical}
Let $K$ be a simplicial complex with underlying graph $\Gamma$.
\begin{enumerate}[label={\normalfont(\alph*)}]
\item $\pi_1(\mathbb{R}DJ_K) \cong W_\Gamma$. \label{thm_aspherical_a}
\item $\pi_1(\mathbb{R}\mathcal{Z}_K) \cong [W_\Gamma,W_\Gamma]$.
\item $\pi_i(\mathbb{R}DJ_K) \cong \pi_i(\mathbb{R}\mathcal{Z}_K)$ for $i\geqslant 2$. 
\item Both spaces $\mathbb{R}DJ_K$ and $\mathbb{R}\mathcal{Z}_K$ are aspherical if and only if $K$ is flag.
\end{enumerate}
\end{theorem}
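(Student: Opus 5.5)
The plan is to prove the four parts in the order (a), (c), (b), (d), with everything driven by the cubical structure of $\R\mathcal{Z}_K$ and the homotopy fibration \eqref{main_fib}. For (a), $\R DJ_K=(\R P^\infty,pt)^K$ is a colimit over the face poset of $K$ of the products $(\R P^\infty)^\sigma$. Its $2$-skeleton, in the product CW structure with one cell per dimension in $\R P^\infty$, consists of:
\begin{itemize}
\item one $1$-cell $e_i$ for each vertex $i$;
\item a $2$-cell attached along $e_i^2$ for each $i$ (from $\R P^2$);
\item a $2$-cell $e_i\times e_j$ attached along the commutator $[e_i,e_j]$ for each edge $\{i,j\}\in K$.
\end{itemize}
Van Kampen then gives exactly the presentation of $W_\Gamma$, and higher simplices of $K$ contribute only cells of dimension $\geqslant 3$.

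For (b) and (c), I apply the long exact homotopy sequence to \eqref{main_fib}. Since $\pi_i(B\Z_2^m)=0$ for $i\geqslant 2$, we get $\pi_i(\R\mathcal{Z}_K)\cong\pi_i(\R DJ_K)$ for $i\geqslant 2$, which is (c). In low degrees we get
\[ 1\to\pi_1(\R\mathcal{Z}_K)\to W_\Gamma\to\Z_2^m\to\pi_0(\R\mathcal{Z}_K). \]
$\R\mathcal{Z}_K$ is connected as long as every vertex of $[m]$ lies in $K$: vertices of the cube are joined through edges in coordinate directions $i\in K$. This is the standing convention, and I would also check that ghost vertices are excluded. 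The map $W_\Gamma\to\Z_2^m$ is the map on $\pi_1$ induced by $\R DJ_K\to B\Z_2^m$, which sends each $g_i$ to the $i$th generator, so it is the abelianization. Its kernel is therefore $[W_\Gamma,W_\Gamma]$, giving (b).

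For (d), part (c) together with asphericity of $B\Z_2^m$ shows that the two spaces are aspherical simultaneously, so it suffices to treat $\R\mathcal{Z}_K$.
\begin{itemize}
\item \emph{If $K$ is flag}, then $\R\mathcal{Z}_K$ is a cube complex whose vertex links are all isomorphic to $K$: at a vertex $\epsilon\in\{\pm1\}^m$, the cubes containing it are indexed by the faces $\sigma\in K$. A flag link is Gromov's link condition, so the complex is nonpositively curved, and by the Cartan--Hadamard theorem its universal cover is CAT(0), hence contractible.
\item \emph{If $K$ is not flag}, choose a minimal non-face $\sigma$, with $|\sigma|\geqslant 3$, all of whose edges lie in $K$. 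The full subcomplex $K_\sigma=\partial\Delta^\sigma$ gives a retract $\R\mathcal{Z}_{K_\sigma}$ of $\R\mathcal{Z}_K$, by projecting the coordinates outside $\sigma$ to $1$ (this is valid for full subcomplexes). Now $\R\mathcal{Z}_{\partial\Delta^\sigma}=\partial([-1,1]^\sigma)\cong S^{|\sigma|-1}$ is simply connected with nonzero $\pi_{|\sigma|-1}$. Passing to universal covers, the retraction shows that $\pi_{|\sigma|-1}(\R\mathcal{Z}_K)$ contains a nonzero retract, so $\R\mathcal{Z}_K$ is not aspherical.
\end{itemize}

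The main obstacle is the non-flag direction of (d), specifically checking that the retraction argument yields a genuinely nonzero higher homotopy group. Since $\pi_i$ for $i\geqslant 2$ is computed on universal covers and a retraction induces a split injection on all $\pi_i$, $\pi_{|\sigma|-1}(S^{|\sigma|-1})\cong\Z$ injects into $\pi_{|\sigma|-1}(\R\mathcal{Z}_K)$, which should settle it.
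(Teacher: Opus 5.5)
Your proof is correct. Note that the paper gives no proof of this theorem: it quotes it from \cite[Corollary~3.4]{PV}, and the only ingredient it supplies is the homotopy fibration \eqref{main_fib}, which it also takes as known. Your argument is the standard one from that literature:
\begin{itemize}
\item for (a), van Kampen applied to the $2$-skeleton of $\R DJ_K\subseteq(\R P^\infty)^m$;
\item for (b) and (c), the long exact sequence of \eqref{main_fib}, where $\R DJ_K\to B\Z_2^m$ is the inclusion and so induces $\mathrm{ab}$ on $\pi_1$;
\item for (d) with $K$ flag, Gromov's link condition together with Cartan--Hadamard, using that every vertex link of the cube complex $\R\mathcal{Z}_K$ is $K$;
\item for (d) with $K$ not flag, a retraction of $\R\mathcal{Z}_K$ onto $\R\mathcal{Z}_{\partial\Delta^\sigma}\cong S^{|\sigma|-1}$ for a minimal non-face $\sigma$.
\end{itemize}

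One small simplification: the detour through universal covers in the last step is unnecessary. The retraction already gives a split injection $\Z\cong\pi_{|\sigma|-1}(S^{|\sigma|-1})\hookrightarrow\pi_{|\sigma|-1}(\R\mathcal{Z}_K)$ at the basepoint $(1,\ldots,1)$, and since $|\sigma|-1\geqslant 2$ this already shows $\R\mathcal{Z}_K$ is not aspherical.
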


We therefore have $\mathbb{R}DJ_K\simeq BW_\Gamma$ and $\mathbb{R}\mathcal{Z}_K\simeq B[W_\Gamma,W_\Gamma]$ when $K$ is flag. For all $K$, the map $\R DJ_K \to B\Z_2^m$ of \eqref{main_fib} induces the abelianization map $\mathrm{ab}\colon W_\Gamma \to \Z_2^m$ on fundamental groups with kernel the commutator subgroup $\pi_1(\R\mathcal{Z}_K)\cong [W_\Gamma,W_\Gamma]$.

For a subset $I\subseteq [m]$, the \emph{standard parabolic subgroup} $W_{\Gamma_I}$ of a right-angled Coxeter group $W_\Gamma$ is the subgroup generated by those generators $g_i$ with $i\in I$. As our notation suggests, each standard parabolic is itself a right-angled Coxeter group corresponding to an induced subgraph~$\Gamma_I$. Moreover, each standard parabolic subgroup is a retract of $W_\Gamma$. This follows from a basic property of the real Davis--Januszkiewicz space which we record below. 

For a simplicial complex $K$ on~$[m]$, let $K_I$ denote the \emph{full subcomplex} $K_I=\{\sigma \in K : \sigma \subseteq I\}$.

\begin{lemma} \label{lem_retract}
Let $K$ be a simplicial complex on the vertex set $[m]$ with underlying graph $\Gamma$, and let $I\subseteq [m]$.
\begin{enumerate}[label={\normalfont(\alph*)}]
\item The natural inclusion $\R DJ_{K_I} \hookrightarrow \R DJ_K$ admits a retraction $\pi_I\colon \R DJ_K \to \R DJ_{K_I}$. \label{lem_retract_a}
\item The subgroup inclusion $W_{\Gamma_I} \hookrightarrow W_\Gamma$ admits a retraction $\phi_I\colon W_\Gamma \to W_{\Gamma_I}$. \label{lem_retract_b}
\end{enumerate}
\end{lemma}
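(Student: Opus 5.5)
The plan is to prove part \ref{lem_retract_a} by a direct construction at the level of the product spaces, and then to obtain part \ref{lem_retract_b} by applying $\pi_1$ and invoking \cref{thm_aspherical}\ref{thm_aspherical_a}.

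For \ref{lem_retract_a}, note that $\R DJ_K = (\R P^\infty, pt)^K$ is a subspace of $(\R P^\infty)^m$. Define $p_I\colon (\R P^\infty)^m \to (\R P^\infty)^m$ by keeping the coordinates indexed by $I$ and sending every coordinate $x_j$ with $j\notin I$ to the basepoint $pt$. We identify $\R DJ_{K_I} = (\R P^\infty,pt)^{K_I}$ with the subspace of $(\R P^\infty)^m$ whose coordinates outside $I$ equal $pt$. Since $\R DJ_{K_I}$ is a union of the pieces $(\R P^\infty,pt)^\sigma$ with $\sigma\in K_I\subseteq K$, this identification realizes the natural inclusion $\R DJ_{K_I}\subseteq \R DJ_K$.

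The key check is that $p_I$ maps $\R DJ_K$ into $\R DJ_{K_I}$. Take a point $x\in(\R P^\infty,pt)^\sigma$ with $\sigma\in K$. Then $p_I(x)$ has non-basepoint coordinates only in $\sigma\cap I$, and $\sigma\cap I\in K_I$ because $K$ is closed under taking subsets. Hence $p_I(x)\in(\R P^\infty,pt)^{\sigma\cap I}\subseteq \R DJ_{K_I}$. Moreover, $p_I$ restricts to the identity on $\R DJ_{K_I}$. Therefore $\pi_I := p_I|_{\R DJ_K}$ is a continuous retraction.

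For \ref{lem_retract_b}, apply $\pi_1$ to the composite $\R DJ_{K_I}\hookrightarrow \R DJ_K\xrightarrow{\pi_I}\R DJ_{K_I}$, which is the identity. This makes $\pi_1(\R DJ_{K_I})\to\pi_1(\R DJ_K)$ a split injection.

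It remains to identify this map with $W_{\Gamma_I}\hookrightarrow W_\Gamma$. By \cref{thm_aspherical}\ref{thm_aspherical_a}, and since $K_I$ has underlying graph $\Gamma_I$, the two fundamental groups are $W_{\Gamma_I}$ and $W_\Gamma$. I would make the identification explicit through the standard van Kampen presentation. The $1$-cells of the $i$th $\R P^\infty$ factor give the generators $g_i$. The $2$-cells of each $\R P^2$ factor give the relations $g_i^2=1$. The tori $\R P^1\times\R P^1$ indexed by edges $\{i,j\}$ give the relations $[g_i,g_j]=1$. Under this presentation the inclusion sends $g_i\mapsto g_i$ for $i\in I$, so it is exactly the parabolic inclusion.

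With these identifications, $\phi_I := (\pi_I)_*$ is the homomorphism sending $g_i\mapsto g_i$ for $i\in I$ and $g_j\mapsto 1$ for $j\notin I$. One can also confirm directly that this assignment respects all the defining relations of $W_\Gamma$: each relation either stays a relation of $W_{\Gamma_I}$ or becomes trivial.

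The only point needing care, and the main obstacle, is this compatibility of the isomorphisms in \cref{thm_aspherical}\ref{thm_aspherical_a} with inclusions. The explicit algebraic definition of $\phi_I$ sidesteps it if necessary.
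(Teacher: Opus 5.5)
Your proposal is correct and follows the paper's argument: restricting the coordinate projection (your $p_I$ is this projection followed by the inclusion of $(\R P^\infty)^I$ as the subspace with basepoint coordinates outside $I$) gives the retraction $\pi_I$, and $\phi_I=(\pi_I)_*$ then gives part (b) via Theorem~\ref{thm_aspherical}. The paper leaves implicit the check that the isomorphisms of Theorem~\ref{thm_aspherical} are compatible with the inclusion $\R DJ_{K_I}\hookrightarrow \R DJ_K$. Your van Kampen identification, together with the direct algebraic definition of $\phi_I$ (send $g_j\mapsto 1$ for $j\notin I$), supplies this detail.
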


\begin{proof}
Recall that, by definition, $\R DJ_K=(\R P^\infty,\, pt)^K$ is a subspace of the product $(\R P^\infty)^m$. It is straightforward to check that the projection $(\R P^\infty)^m \to (\R P^\infty)^I=\prod_{i\in I} \R P^\infty$ restricts to a map $\pi_I\colon \R DJ_K \to \R DJ_{K_I}$ which is the identity on $\R DJ_{K_I}$, which proves part~\ref{lem_retract_a}. Part~\ref{lem_retract_b} follows from \cref{thm_aspherical}\ref{thm_aspherical_a} by taking $\phi_I=(\pi_I)_*$ to be the induced map of fundamental groups.
\end{proof}

\subsection{Classifying spaces of coabelian subgroups}

Let $G$ be a group. A subgroup $H\leqslant G$ is called a \emph{coabelian subgroup} of $G$ if it contains the commutator subgroup $[G,G]$ (or, equivalently, $H$ is a normal subgroup and $G/H$ is abelian).

Let $K$ be a simplicial complex on the vertex set $[m]$ with underlying graph $\Gamma=K^{(1)}$. Each surjective group homomorphism $\rho\colon \Z_2^m\to \Z_2^n$ gives rise to:
\begin{enumerate}
\item a coabelian subgroup $G_\rho=\ker(\rho\circ\mathrm{ab})$ of the right-angled Coxeter group $W_\Gamma$; and
\item a homotopy quotient of $\R\mathcal{Z}_K$ with fundamental group $G_\rho$. \label{enum_item_2}
\end{enumerate}
Here $\mathrm{ab}\colon W_\Gamma \to \Z_2^m$ is the abelianization map with kernel $[W_\Gamma,W_\Gamma]$, so $G_\rho$ is clearly coabelian. To see \eqref{enum_item_2}, consider the homotopy quotient by the restricted action of $\ker\rho \cong \Z_2^{m-n}$ on $\R\mathcal{Z}_K$. The Borel fibration of this action is related to the Borel fibration \eqref{main_fib} of the entire $\Z_2^m$-action by a homotopy fibration diagram
\begin{equation} \label{fib_diagram}
    \begin{tikzcd}
    \Z_2^{m-n} \ar[r]\ar[d,transform canvas={xshift=0.35ex},-]\ar[d,transform canvas={xshift=-0.35ex},-] & \Z_2^m \ar[r,"\rho"] \ar[d] & \Z_2^n \ar[r] \ar[d] & B\Z_2^{m-n} \ar[d,transform canvas={xshift=0.35ex},-]\ar[d,transform canvas={xshift=-0.35ex},-] \\
    \Z_2^{m-n} \ar[r] & \R\mathcal{Z}_K \ar[r]\ar[d] & (\R\mathcal{Z}_K)_{\Z_2^{m-n}} \ar[r]\ar[d] & B\Z_2^{m-n} \\
     & \R DJ_K \ar[r,transform canvas={yshift=0.35ex},-]\ar[r,transform canvas={yshift=-0.35ex},-]\ar[d] & \mathbb{R}DJ_K \ar[d] & \\
     & B\Z_2^m \ar[r,"B\rho"] & B\Z_2^n, &
    \end{tikzcd}
\end{equation}
which exhibits the homotopy quotient $(\R\mathcal{Z}_K)_{\Z_2^{m-n}}$ as the homotopy fibre of the composite map $\R DJ_K \to B\Z_2^m \xrightarrow{B\rho} B\Z_2^n$. Since $\pi_1(\mathbb{R}DJ_K) \cong W_\Gamma$ and $\pi_1(\R\mathcal{Z}_K) \cong [W_\Gamma,W_\Gamma]$ by \cref{thm_aspherical}, applying $\pi_1(-)$ to the bottom squares of the above diagram yields
\begin{equation} \label{two_diagrams}
    \begin{tikzcd}
    \R\mathcal{Z}_K \ar[r] \ar[d] & (\R\mathcal{Z}_K)_{\Z_2^{m-n}} \ar[d] \\
    \mathbb{R}DJ_K \ar[r,transform canvas={yshift=0.35ex},-] \ar[r,transform canvas={yshift=-0.35ex},-] \ar[d] & \mathbb{R}DJ_K \ar[d] \\
    B\Z_2^m \ar[r,"B\rho"] & B\Z_2^n
    \end{tikzcd}
    \begin{tikzcd}[column sep=small]
    {\quad} \ar[r, dotted, bend left=60, "\pi_1"] & {\quad} \ar[l, dotted, bend left=60, "B"]
    \end{tikzcd}
    \begin{tikzcd}
    {[W_\Gamma,W_\Gamma]} \ar[r] \ar[d] & G_\rho \ar[d] \\
    W_\Gamma \ar[r,transform canvas={yshift=0.35ex},-] \ar[r,transform canvas={yshift=-0.35ex},-] \ar[d,"\mathrm{ab}"] & W_\Gamma \ar[d] \\
    \Z_2^m \ar[r,"\rho"] & \Z_2^n,
    \end{tikzcd}
\end{equation}
where both columns on the right are short exact sequences of groups, and $\pi_1\big((\R\mathcal{Z}_K)_{\Z_2^{m-n}})\big)=G_\rho$, as desired.
Moreover, if $K$ is a flag complex, then \cref{thm_aspherical} and the long exact sequence of homotopy groups induced by either of the fibrations involving $(\R\mathcal{Z}_K)_{\Z_2^{m-n}}$ above show that $(\R\mathcal{Z}_K)_{\Z_2^{m-n}}$ is aspherical and hence an Eilenberg--Mac Lane space $K(G_\rho,1)$, which is the classifying space $BG_\rho$ as $G_\rho$ is discrete. In this case, the classifying space functor $B$ transforms the diagram of groups on the right to the diagram of spaces on the left, up to homotopy equivalence.

The next proposition describes a one-to-one correspondence between classifying spaces of coabelian subgroups of right-angled Coxeter groups and homotopy quotients of real moment-angle complexes over flag complexes.

\begin{proposition} \label{prop_corresp}
Let $K$ be a flag complex on the vertex set $[m]$ with underlying graph $\Gamma$. For every subgroup $A \leqslant \Z_2^m$, the homotopy quotient $(\R\mathcal{Z}_K)_A$ is the classifying space $BG$ of a coabelian subgroup $G$ of the right-angled Coxeter group $W_\Gamma$. Moreover, every coabelian subgroup of $W_\Gamma$ admits a classifying space of the form $(\R\mathcal{Z}_K)_A$ for some subgroup $A \leqslant \Z_2^m$.
\end{proposition}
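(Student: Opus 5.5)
The plan is to reduce both directions to the discussion preceding the statement, in particular diagrams \eqref{fib_diagram} and \eqref{two_diagrams}.

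\textbf{Forward direction.} Given a subgroup $A\leqslant\Z_2^m$, we have $A\cong\Z_2^{m-n}$ for some $n$. Choose a surjective homomorphism $\rho\colon\Z_2^m\to\Z_2^n$ with $\ker\rho=A$; for instance, compose the quotient map $\Z_2^m\to\Z_2^m/A$ with an isomorphism $\Z_2^m/A\cong\Z_2^n$, which exists since every quotient of an elementary abelian $2$-group is again one. The restricted $A$-action on $\R\mathcal{Z}_K$ is then exactly the $\ker\rho$-action considered above. Diagram \eqref{fib_diagram} exhibits $(\R\mathcal{Z}_K)_A$ as the homotopy fibre of the composite $\R DJ_K\to B\Z_2^m\xrightarrow{B\rho}B\Z_2^n$.

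Two facts then follow from earlier results.
\begin{enumerate}
\item From the long exact homotopy sequence of $(\R\mathcal{Z}_K)_A\to\R DJ_K\to B\Z_2^n$, together with the asphericity of $\R DJ_K$ for flag $K$ (\cref{thm_aspherical}), we get $\pi_i((\R\mathcal{Z}_K)_A)=0$ for $i\geqslant 2$.
\item The composite $\R DJ_K\to B\Z_2^n$ induces $\rho\circ\mathrm{ab}$ on $\pi_1$, and this map is surjective. The low-degree part of the same sequence is
\[
0\to\pi_1((\R\mathcal{Z}_K)_A)\to W_\Gamma\xrightarrow{\rho\circ\mathrm{ab}}\Z_2^n\to\pi_0((\R\mathcal{Z}_K)_A)\to 0 .
\]
Hence $(\R\mathcal{Z}_K)_A$ is connected and $\pi_1((\R\mathcal{Z}_K)_A)\cong\ker(\rho\circ\mathrm{ab})=G_\rho$.
\end{enumerate}
The group $G_\rho$ contains $\ker(\mathrm{ab})=[W_\Gamma,W_\Gamma]$, so it is coabelian. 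A connected aspherical CW-type space is a $K(G_\rho,1)=BG_\rho$, which proves the first claim. Here one should note that the Borel construction has the homotopy type of a CW complex, since $\R\mathcal{Z}_K$ is a finite cubical complex with a cellular action.

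\textbf{Converse.} Let $G\leqslant W_\Gamma$ be coabelian. Then $[W_\Gamma,W_\Gamma]\leqslant G$, so $G=\mathrm{ab}^{-1}(\mathrm{ab}(G))$ with $B'\coloneqq\mathrm{ab}(G)\leqslant\Z_2^m$. Let $\rho\colon\Z_2^m\to\Z_2^m/B'\cong\Z_2^n$ be the quotient map. Then
\[
\ker(\rho\circ\mathrm{ab})=\mathrm{ab}^{-1}(B')=G,
\]
so $G=G_\rho$. Setting $A=\ker\rho=B'=G/[W_\Gamma,W_\Gamma]$, the forward direction gives $(\R\mathcal{Z}_K)_A\simeq BG$.

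\textbf{Main obstacle.} The only delicate point is justifying that the homotopy fibre identification in \eqref{fib_diagram} is compatible on $\pi_1$ with the inclusion into $W_\Gamma$. This holds because $(\R\mathcal{Z}_K)_A\to(\R\mathcal{Z}_K)_{\Z_2^m}\simeq\R DJ_K$ is a covering map with fibre $\Z_2^m/A$: it comes from the map $EA\times_A X\to E\Z_2^m\times_{\Z_2^m}X$, where we take $EA=E\Z_2^m$. For a covering map the induced map on $\pi_1$ is injective, with image the preimage of $A$ under the map $\pi_1(\R DJ_K)\to\Z_2^m$. This covering-space description gives a direct alternative argument for both asphericity and the identification of $\pi_1$.
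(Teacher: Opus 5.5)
Your proposal is correct and follows the paper's proof. The forward direction is the long exact homotopy sequence of $(\R\mathcal{Z}_K)_A\to\R DJ_K\to B(\Z_2^m/A)$ together with asphericity of $\R DJ_K$. Your converse, taking $A=\mathrm{ab}(G)=G/[W_\Gamma,W_\Gamma]$ and noting $G=\ker(\rho\circ\mathrm{ab})$, is the paper's argument: the paper pulls back the Borel fibration along $BA\to B\Z_2^m$, and notes as an alternative the same diagrams you use. Your covering-space observation and your remark on CW homotopy type are valid extra justifications that the paper leaves implicit.
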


\begin{proof}
Given a subgroup $A\leqslant \Z_2^m$ of corank $n$, let $\rho\colon \Z_2^m \to \Z_2^m/A\cong \Z_2^n$ denote the quotient map and let $G$ be the kernel of the composite homomorphism
\[
W_\Gamma \stackrel{\mathrm{ab}\,}{\longrightarrow} \Z_2^m \stackrel{\rho\,}{\longrightarrow} \Z_2^n. 
\]
Then $G$ is a coabelian subgroup of $W_\Gamma$. Now consider the homotopy quotient $(\R\mathcal{Z}_K)_A$ and observe that diagram~\eqref{two_diagrams} identifies $(\R\mathcal{Z}_K)_A$ with $BG$.

Conversely, given a coabelian subgroup $G$ of $W_\Gamma$, let $A=G/[W_\Gamma,W_\Gamma]$ and note that this is a subgroup of $W_\Gamma/[W_\Gamma,W_\Gamma]=\Z_2^m$. Pulling back the Borel fibration \eqref{main_fib} along the map $BA\to B\Z_2^m$ induced by the subgroup inclusion yields the Borel fibration of the restricted action of $A$ on $\R\mathcal{Z}_K$:
\[
\begin{tikzcd}
\R\mathcal{Z}_K \ar[r,transform canvas={yshift=0.35ex},-]\ar[r,transform canvas={yshift=-0.35ex},-] \ar[d] & \R\mathcal{Z}_K \ar[d] & \\
(\R\mathcal{Z}_K)_A \ar[r] \ar[d] & \R DJ_K \ar[r] \ar[d] & B(\Z_2^m/A) \ar[d,transform canvas={xshift=0.35ex},-]\ar[d,transform canvas={xshift=-0.35ex},-] \\
BA \ar[r] & B\Z_2^m \ar[r,"B\rho"] & B(\Z_2^m/A).
\end{tikzcd}
\]
Here the map labelled $B\rho$ is induced by the quotient $\rho\colon \Z_2^m\to \Z_2^m/A$, and the horizontal map above it is the evident composition. Since $K$ is flag, \cref{thm_aspherical} and the long exact sequence of homotopy groups induced by the fibration along the second row above show that $(\R\mathcal{Z}_K)_A \simeq BG$, as $\pi_1\big((\R\mathcal{Z}_K)_A)\big) = \ker(\rho\circ\mathrm{ab}) = G$. (Alternatively, the homotopy quotient $(\R\mathcal{Z}_K)_A$ can be identified with $BG$ by inspection of diagrams~\eqref{fib_diagram} and~\eqref{two_diagrams}.)
\end{proof}

\begin{example}
Under the assumptions of \cref{prop_corresp}, the homotopy quotients of the $\Z_2^m$-space $\R\mathcal{Z}_K$ by the trivial subgroup $A=\{1\}$ and by the entire $2$-torus $A=\Z_2^m$ correspond to classifying spaces of the commutator subgroup $[W_\Gamma,W_\Gamma]$ and the entire right-angled Coxeter group $W_\Gamma$, respectively, by \cref{thm_aspherical} (since $(\R\mathcal{Z}_K)_{\Z_2^m} \simeq \R DJ_K$).
\end{example}

\begin{example}
The \emph{alternating subgroup} $W_\Gamma^+$ is the index two subgroup of $W_\Gamma$ defined by the kernel of the sign character $\chi\colon W_\Gamma \to \Z_2=\{\pm 1\}$ with $\chi(g_i)=-1$ for all $i\in [m]$; see~\cite{BRR}. ($W_\Gamma^+$ is the analogue of the \emph{Bestvina-Brady group} given by the kernel of the analogously defined character $A_\Gamma \to \Z$ of the right-angled Artin group $A_\Gamma$.) Let $K$ be the flag complex with underlying graph~$\Gamma$, and let $A\leqslant \Z_2^m$ be the subgroup consisting of all $m$-tuples $(x_1,\ldots,x_m)$ with an even number of non-identity entries $x_i=-1$. Then the homotopy quotient $(\R\mathcal{Z}_K)_A$ is a classifying space for $W_\Gamma^+$.
\end{example}

\begin{remark}
Classifying spaces of Bestvina--Brady groups, Artin kernels, and other coabelian subgroups of right-angled Artin groups (or other graph product groups) can similarly be constructed as homotopy quotients of the polyhedral product $(\R,\Z)^K$ (or $(EG,G)^K$, more generally) in place of $\R\mathcal{Z}_K=(D^1,S^0)^K\simeq (E\Z_2,\Z_2)^K$ (cf.~\cite{D,GILV,LS}).
\end{remark}

Of particular importance will be the homotopy quotients of $\mathbb{R}\mathcal{Z}_K$ by \emph{coordinate subgroups} $\Z_2^I$ of $\Z_2^m$, namely those of the form
\begin{equation} \label{coord_subgp}
\Z_2^I = \big\{ (x_1,\ldots,x_m)\in \Z_2^m \,:\, x_i=1 \text{ for } i\notin I \big\}
\end{equation}
for some $I\subseteq [m]$. The next lemma describes the family of subgroups of $W_\Gamma$ which these give rise to under the correspondence of \cref{prop_corresp}.

For $J\subseteq [m]$, let $\pi_J\colon \mathbb{R}DJ_K \to \mathbb{R}DJ_{K_J}$ be the retraction of \cref{lem_retract}\ref{lem_retract_a}, and let $F_{\pi_J}$ denote its homotopy fibre so that there is a (split) homotopy fibration
\begin{equation} \label{proj_fib}
F_{\pi_J} \longrightarrow \mathbb{R}DJ_K \stackrel{\pi_J}{\longrightarrow} \mathbb{R}DJ_{K_J}.
\end{equation}
The induced map of fundamental groups $(\pi_J)_*=\phi_J\colon W_\Gamma \to W_{\Gamma_J}$ is the natural projection onto the standard parabolic subgroup on the vertices $J\subseteq [m]$ (see \cref{lem_retract}).

\begin{lemma} \label{lem_coord_quot}
Let $K$ be a flag complex on the vertex set $[m]$ with underlying graph $\Gamma$. Let $I\subseteq [m]$ and $J=[m]\setminus I$.
\begin{enumerate}[label={\normalfont(\alph*)}]
\item $F_{\pi_J}\simeq B\overline{W}_{\Gamma_I}$, where $\overline{W}_{\Gamma_I}$ is the normal closure of $W_{\Gamma_I}$ in $W_\Gamma$. \label{part_a} 
\item $(\R\mathcal{Z}_K)_{\Z_2^I} \simeq BG$, where $G$ is the subgroup of $W_\Gamma$ given by the internal semidirect product
\[
G=\overline{W}_{\Gamma_I} \rtimes [W_{\Gamma_J},W_{\Gamma_J}].
\] \label{part_b}
\end{enumerate}
\end{lemma}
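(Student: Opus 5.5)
The plan is to work at the level of fundamental groups, using asphericity (\cref{thm_aspherical}) to upgrade group-theoretic identifications to homotopy equivalences.

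For part~\ref{part_a}, I would first show that $F_{\pi_J}$ is aspherical. Since $\pi_J$ has a section (\cref{lem_retract}), the long exact sequence of homotopy groups splits. Both $\R DJ_K$ and $\R DJ_{K_J}$ are aspherical: $K$ is flag, hence so is the full subcomplex $K_J$. It follows that $\pi_i(F_{\pi_J})=0$ for $i\geqslant 2$. The map $(\pi_J)_*=\phi_J$ is surjective, so $\pi_0(F_{\pi_J})$ is a point, and $\pi_1(F_{\pi_J})\cong\ker\phi_J$.

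It then remains to identify $\ker\phi_J$ with $\overline{W}_{\Gamma_I}$. The map $\phi_J$ sends $g_i\mapsto 1$ for $i\in I$ and $g_j\mapsto g_j$ for $j\in J$, so $\overline{W}_{\Gamma_I}\subseteq\ker\phi_J$. Conversely, the quotient $W_\Gamma/\overline{W}_{\Gamma_I}$ has the presentation obtained by killing the $g_i$ with $i\in I$. That presentation is exactly $W_{\Gamma_J}$, and the induced map $W_\Gamma/\overline{W}_{\Gamma_I}\to W_{\Gamma_J}$ is an isomorphism, giving equality. This also yields the split extension
\[1\to\overline{W}_{\Gamma_I}\to W_\Gamma\to W_{\Gamma_J}\to 1,\]
with splitting given by the inclusion $W_{\Gamma_J}\hookrightarrow W_\Gamma$.

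For part~\ref{part_b}, \cref{prop_corresp} gives $(\R\mathcal{Z}_K)_{\Z_2^I}\simeq BG$ with $G=\ker(\rho\circ\mathrm{ab})$, where $\rho\colon\Z_2^m\to\Z_2^m/\Z_2^I\cong\Z_2^J$ is the coordinate projection. Thus $G=\{w : \mathrm{ab}(w)\in\Z_2^I\}$, and the task is to show this equals $\overline{W}_{\Gamma_I}\rtimes[W_{\Gamma_J},W_{\Gamma_J}]$. The key observation is that $\rho\circ\mathrm{ab}=\mathrm{ab}_J\circ\phi_J$, where $\mathrm{ab}_J$ is the abelianization of $W_{\Gamma_J}$; this can be checked on generators. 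Hence
\[G=\phi_J^{-1}\big(\ker \mathrm{ab}_J\big)=\phi_J^{-1}\big([W_{\Gamma_J},W_{\Gamma_J}]\big).\]
Using the split extension from part~\ref{part_a}, the preimage under $\phi_J$ of a subgroup $H\leqslant W_{\Gamma_J}$ is $\overline{W}_{\Gamma_I}\rtimes H$. Here $H$ is viewed inside $W_\Gamma$ via the section; every $w$ factors as $w=(w\,\phi_J(w)^{-1})\phi_J(w)$, and the intersection is trivial. Taking $H=[W_{\Gamma_J},W_{\Gamma_J}]$ gives the claim.

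The main obstacle is the identification $\ker\phi_J=\overline{W}_{\Gamma_I}$ in part~\ref{part_a}. I would handle it by the presentation argument above rather than by topology. A topological alternative would be to compare $F_{\pi_J}$ with a polyhedral product, but this is unnecessary here.
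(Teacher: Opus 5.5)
Your proposal is correct and follows essentially the same route as the paper: asphericity plus the long exact sequence of $F_{\pi_J}\to\R DJ_K\to\R DJ_{K_J}$ for (a), and for (b) factoring $\mathrm{proj}\circ\mathrm{ab}=\mathrm{ab}_J\circ\phi_J$ and pulling back the split extension $1\to\overline{W}_{\Gamma_I}\to W_\Gamma\to W_{\Gamma_J}\to 1$. The only difference is that you prove $\ker\phi_J=\overline{W}_{\Gamma_I}$ directly, by adjoining the relations $g_i=1$ ($i\in I$) to the presentation and eliminating those generators, whereas the paper cites \cite[Proposition~2.1(b)]{G} for this fact.
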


\begin{proof}
Since $K$ and its full subcomplex $K_J$ are flag, we have equivalences $\mathbb{R}DJ_K\simeq BW_\Gamma$ and $\mathbb{R}DJ_{K_J}\simeq BW_{\Gamma_J}$ by \cref{thm_aspherical}, so it follows immediately from the homotopy fibration~\eqref{proj_fib} that $F_{\pi_J}$ is the classifying space of the subgroup $\ker(\pi_J)_*=\ker\phi_J$. Clearly $\ker\phi_J$ is a normal subgroup of $W_\Gamma$ containing $W_{\Gamma_I}$; that it is the smallest such subgroup $\overline{W}_{\Gamma_I}$ is~\cite[Proposition~2.1(b)]{G}.

Next, observe that $(\mathbb{R}\mathcal{Z}_K)_{(\mathbb{Z}_2)^I}$ occurs as the homotopy quotient in diagram~\eqref{fib_diagram} when the coordinate homomorphism $\rho$ is taken to be the projection $\Z_2^m \to \Z_2^J$ with $J=[m]\setminus I$. It therefore follows from diagram~\eqref{two_diagrams} that
\[
\pi_1\big((\R\mathcal{Z}_K)_{\Z_2^I}\big) = \ker \big( W_\Gamma \xrightarrow{\mathrm{ab}} \Z_2^m \xrightarrow{\mathrm{proj}} \Z_2^J \big).
\]
Letting $G$ denote this kernel, we have $(\R\mathcal{Z}_K)_{\Z_2^I} \simeq BG$ by the discussion following~\eqref{two_diagrams} since $K$ is flag. To identify $G$ more precisely, note that the group homomorphism above equals the composite $W_\Gamma \xrightarrow{\phi_J} W_{\Gamma_J} \xrightarrow{\mathrm{ab}} \Z_2^J$. Taking kernels vertically in the bottom square below therefore leads to a commutative diagram
\[
\begin{tikzcd}
\overline{W}_{\Gamma_I} \ar[r] \ar[d,transform canvas={xshift=0.35ex},-]\ar[d,transform canvas={xshift=-0.35ex},-] & G \ar[r] \ar[d] & {[W_{\Gamma_J},W_{\Gamma_J}]} \ar[d] \ar[l, dashrightarrow, bend right=33] \\
\overline{W}_{\Gamma_I} \ar[r] & W_\Gamma \ar[r,"\phi_J"] \ar[d] & W_{\Gamma_J} \ar[d, "\mathrm{ab}"] \ar[l, dashrightarrow, bend right=30] \\
 & \Z_2^J \ar[r,transform canvas={yshift=0.35ex},-]\ar[r,transform canvas={yshift=-0.35ex},-] & \Z_2^J
\end{tikzcd}
\]
where each row and column is a short exact sequence of groups, and the sections represented by dotted arrows are subgroup inclusions. The split short exact sequence along the top row identifies $G=\pi_1\big((\R\mathcal{Z}_K)_{\Z_2^I}\big)$ with the desired semidirect product.
\end{proof}

\begin{remark}
We note that the normal closure $\overline{W}_{\Gamma_I} \cong \pi_1(F_{\pi_J})$ of a standard parabolic subgroup appearing in \cref{lem_coord_quot} is itself a Coxeter group; see \cite[Proposition~2.1]{G}.
\end{remark}

\section{Equivariant formality for real moment-angle complexes} \label{sec_3}

The goal of this section is to prove \cref{main_thm_2} by establishing a real analogue of the following equivariant formality criterion for ordinary moment-angle complexes from~\cite{AB}.

\begin{theorem}[{\cite[Theorem~C]{AB}}] \label{thm_AB}
Let $K$ be a flag complex on the vertex set $[m]$, and let $I\subseteq[m]$. Then the coordinate $T^I$-action on $\mathcal{Z}_K$ is equivariantly formal over a field $k$ if and only if $I\in K$ and $K_{\{i,j\}}\ast K_{I\setminus \{i,j\}} \subseteq K$ for every missing edge $\{i,j\} \notin K$.
\end{theorem}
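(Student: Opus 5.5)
Since this is \cite[Theorem~C]{AB}, we only sketch how we would reprove it; the paper itself uses it as a black box. The plan is to treat necessity and sufficiency separately, working with the algebraic model of the equivariant cohomology. Put $J=[m]\setminus I$. The Borel construction $(\mathcal{Z}_K)_{T^I}$ is homotopy equivalent to the polyhedral product with pairs $(\mathbb{C}P^\infty,pt)$ in the coordinates $I$ and $(D^2,S^1)$ in the coordinates $J$. Its cohomology is $\mathrm{Tor}^{k[v_j:j\in J]}(k[K],k)$, a module over $k[v_i:i\in I]$. Since $T^I$ is connected, it acts trivially on $H^*(\mathcal{Z}_K;k)$, so equivariant formality amounts to collapse of the Serre spectral sequence of $\mathcal{Z}_K\to(\mathcal{Z}_K)_{T^I}\to BT^I$.

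\emph{Necessity.} We first reduce to full subcomplexes. For $L\subseteq[m]$ the inclusion $\mathcal{Z}_{K_L}\hookrightarrow\mathcal{Z}_K$ has a $T^m$-equivariant retraction. This retraction is induced by the coordinatewise retraction of $(D^2,S^1)$ onto $(pt,pt)$, realized by a radial map fixing $1\in S^1$ that is equivariant up to homotopy for the coordinates outside $L$.

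\begin{itemize}
\item The case $L\supseteq I$. Then $H^*_{T^I}(\mathcal{Z}_{K_L})$ is a direct summand of $H^*_{T^I}(\mathcal{Z}_K)$. A graded direct summand of a free $k[v_I]$-module is projective, hence free, so formality passes to $K_L$.
\item The condition $I\in K$. By the localization theorem, an equivariantly formal $T^I$-action on a compact space with nonzero cohomology has nonempty fixed set. The fixed set of $T^I$ on $\mathcal{Z}_K$ is nonempty if and only if $I\in K$.
\item The join condition. Suppose it fails. Then there are a missing edge $\{i,j\}$ and a vertex $l\in I\setminus\{i,j\}$ not adjacent to, say, $i$. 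Restrict to $L=I\cup\{i,j\}$. We would show that the degree-$3$ class in $H^3(\mathcal{Z}_{K_L})$ coming from the missing edge $\{i,l\}$ (or $\{i,j\}$, whichever has an endpoint in $I$) is not permanent. Its transgression is nonzero: it lands on the relation $v_iv_l$ in the Stanley--Reisner ring, which is nonzero modulo the image of the Koszul differentials from $J$. This contradicts collapse.
\end{itemize}

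\emph{Sufficiency.} Assume $I\in K$ and the join condition holds. Vertices of $I$ lying in no missing edge are cone points, so $K=\Delta^{I'}\ast K''$ and $\mathcal{Z}_K=(D^2)^{I'}\times\mathcal{Z}_{K''}$. The $T^{I'}$-action on $(D^2)^{I'}$ is formal, so we may assume every vertex of $I$ lies in a missing edge. Flagness together with the join condition then forces each $i\in I$ to be adjacent to every vertex of every missing edge not containing $i$. We would use this to show that $v_i-v_{j}$, for a suitable partner $j\in J$ of each $i\in I$, makes $k[K]$ free over the combined polynomial subring. Equivalently, we would produce an explicit $k[v_I]$-basis of $\mathrm{Tor}^{k[v_J]}(k[K],k)$ via a Hochster-type decomposition indexed by full subcomplexes. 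Once the generators are shown to lift, the Serre spectral sequence collapses.

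The main obstacle is this sufficiency step: organizing the combinatorics so that the Koszul complex over $k[v_J]$ splits compatibly with the $k[v_I]$-action. We would first try an induction on $|I|$. The inductive step would remove one vertex $i\in I$ and use the join condition to write $K$ as a pushout $K=(\operatorname{st}_K i)\cup_{\operatorname{lk}_K i}K_{[m]\setminus i}$, with $\operatorname{lk}_K i$ containing every missing edge avoiding $i$. It would then use a Mayer--Vietoris argument in equivariant cohomology to show that freeness is preserved.
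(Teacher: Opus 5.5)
The paper does not prove this statement. It is quoted from \cite[Theorem~C]{AB} and used only as a black box, for (b)$\Leftrightarrow$(d) in Theorem~\ref{thm_ef}. Your sketch therefore has to stand on its own, and it has two genuine gaps.

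\textbf{Necessity of the join condition.} This step targets a class that can never obstruct. Once $I\in K$, every class in $H^3(\mathcal{Z}_K;k)$ is a permanent cycle. Since $H^1(\mathcal{Z}_K)=H^2(\mathcal{Z}_K)=0$, the only possible differential on $E_2^{0,3}$ is $d_4$ into $E_4^{4,0}$. But a $T^I$-fixed point gives a section of $(\mathcal{Z}_K)_{T^I}\to BT^I$, so $H^*(BT^I)$ injects into $H^*_{T^I}(\mathcal{Z}_K)$ and the bottom row is never hit. Relatedly, $v_iv_l$ with $i\notin I$ is not even an element of $H^*(BT^I)=k[v_I]$, and $u_iv_l$ is already a cocycle in the equivariant Koszul model $k[K]\otimes\Lambda[u_J]$. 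The real obstruction is a $d_2$ on a degree-$4$ class, equivalently $v_l$-torsion on the class of the missing edge $\{i,j\}$ that avoids $l$.

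A correct argument runs as follows.
\begin{enumerate}
\item Formality passes to the subtorus $T^{\{l\}}$, because $H^*_{T^I}(\mathcal{Z}_K)\to H^*(\mathcal{Z}_K)$ factors through $H^*_{T^{\{l\}}}(\mathcal{Z}_K)$.
\item It then passes to $\mathcal{Z}_{K_L}$ with $L=\{i,j,l\}$ by your retraction. Take this retraction to be the coordinate projection $(D^2)^m\to(D^2)^L$. It and the inclusion are $T^{\{l\}}$-equivariant, not $T^m$-equivariant.
\item In $k[K_L]\otimes\Lambda[u_i,u_j]$ one has $d(u_iu_jv_l)=-u_iv_jv_l$, because $v_iv_l=0$.
\item The class $[u_iv_j]\in H^3_{T^{\{l\}}}(\mathcal{Z}_{K_L})$ is nonzero, since it restricts to the Hochster class of the missing edge $\{i,j\}$. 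By the previous step it is killed by $v_l$, so the equivariant cohomology is not free.
\end{enumerate}
Alternatively, compare total Betti numbers of $\mathcal{Z}_{K_L}$ and of its fixed set $\mathcal{Z}_{\lk_{K_L}(l)}$ from Lemma~\ref{lem_coordfixedpts}: these are $4$ versus $2$, or $6$ versus $4$.

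\textbf{Sufficiency.} This part is not established. The Mayer--Vietoris induction is only a plan, and the intermediate claim cannot hold as you describe it. The subring generated by the $v_j$ ($j\in J$) together with the differences $v_i-v_j$ is all of $k[v_1,\dots,v_m]$, and $k[K]$ is free over it only when $K$ is a simplex.

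The missing idea is a splitting. For $l\in I$, let $M_l$ be the set of non-neighbours of $l$.
\begin{itemize}
\item The join condition says exactly that each $c\in M_l$ is adjacent to every vertex other than $l$.
\item Hence the $M_l$ are pairwise disjoint cliques, disjoint from $I$.
\item Set $N=[m]\setminus\bigcup_{l\in I}(\{l\}\cup M_l)$. Flagness then gives
\[
K=\mathop{\ast}_{l\in I}\bigl(\{l\}\sqcup\Delta^{M_l}\bigr)\ast K_N,\qquad
\mathcal{Z}_K\cong\prod_{l\in I}\mathcal{Z}_{\{l\}\sqcup\Delta^{M_l}}\times\mathcal{Z}_{K_N},
\]
with $T^I$ acting factorwise.
\item Each circle factor is formal. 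Its fixed set is $(S^1)^{M_l}$, and both spaces have total Betti number $2^{|M_l|}$ by Hochster's formula. Equivalently, $\mathrm{Tor}^{k[v_{M_l}]}\bigl(k[\{l\}\sqcup\Delta^{M_l}],k\bigr)$ is free over $k[v_l]$ on $1$ and the classes $u_Sv_l$ with $\varnothing\neq S\subseteq M_l$.
\item K\"unneth then gives formality of the product.
\end{itemize}
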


Rather than redoing the work of~\cite{AB} in the context of elementary abelian $2$-group actions on real moment-angle complexes $\R\mathcal{Z}_K$, instead of torus actions on $\mathcal{Z}_K$, we identify the equivariant formality of coordinate subgroup actions on $\R\mathcal{Z}_K$ and $\mathcal{Z}_K$ by describing their fixed points and applying the criteria of \cref{thm_bettisumcrit,thm_compatiblecrit}.

\subsection{Reduction to coordinate subgroup actions}
To classify the subgroups $A\leqslant \Z_2^m$ that act equivariantly formally on $\R\mathcal{Z}_K$ for a given simplicial complex $K$, we first show that the problem can be reduced to the case of coordinate subgroups $A=\Z_2^I$ (see~\eqref{coord_subgp}).

\begin{definition} \label{def_hull}
Let $A$ be a subgroup of $\Z_2^m$. The \emph{coordinate hull} of $A$, denoted $\operatorname{hull}(A)$, is the smallest coordinate subgroup of $\Z_2^m$ containing $A$. More precisely,
\[
\operatorname{hull}(A) = \Z_2^I \quad\text{for}\quad 
I=\big\{ i\in[m] \,:\, \mathrm{proj}_i(A)\neq\{1\} \big\},
\]
where $\mathrm{proj}_i\colon \Z_2^m \to \Z_2$ is the $i^\text{th}$ coordinate projection.
\end{definition}

\begin{lemma} \label{lem_fixedpts}
Let $K$ be a simplicial complex on the vertex set $[m]$, and let $A$ be a subgroup of $\Z_2^m$. Then there is an equality of fixed points
\[
(\R\mathcal{Z}_K)^A = (\R\mathcal{Z}_K)^{\operatorname{hull}(A)}.
\]
\end{lemma}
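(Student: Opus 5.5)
We prove the two inclusions directly. The argument works coordinatewise inside the cube $(D^1)^m$ and never uses the structure of $K$.

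Since $A\leqslant \operatorname{hull}(A)$, any point fixed by $\operatorname{hull}(A)$ is fixed by $A$. This gives $(\R\mathcal{Z}_K)^{\operatorname{hull}(A)}\subseteq(\R\mathcal{Z}_K)^A$.

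For the reverse inclusion, we first note how the action looks in coordinates. An element $a=(a_1,\ldots,a_m)\in\Z_2^m$, written multiplicatively with $a_i\in\{\pm1\}$, acts on a point $x=(x_1,\ldots,x_m)\in\R\mathcal{Z}_K\subseteq(D^1)^m$ by $a\cdot x=(a_1x_1,\ldots,a_mx_m)$. Hence $a$ fixes $x$ if and only if $x_i=0$ for every $i$ with $a_i=-1$.

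It follows that $x$ is fixed by $A$ if and only if $x_i=0$ for every $i$ such that some $a\in A$ has $a_i=-1$. By \cref{def_hull}, these are exactly the indices $i$ with $\mathrm{proj}_i(A)\neq\{1\}$, that is, the indices $i\in I$. So
\[
(\R\mathcal{Z}_K)^A=\{x\in\R\mathcal{Z}_K : x_i=0 \text{ for all } i\in I\}.
\]
Applying the same computation to $\Z_2^I$ shows that $(\R\mathcal{Z}_K)^{\Z_2^I}$ is the same subset of $\R\mathcal{Z}_K$. This proves the reverse inclusion, and hence the equality.

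Every step is elementary; the only care needed is the translation between the multiplicative convention $\{\pm1\}$ for $\Z_2$ and the condition $\mathrm{proj}_i(A)\neq\{1\}$ in the definition of the hull.

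We also record, for later use, a combinatorial description of this fixed set. The points with $x_i=0$ for all $i\in I$ lie in $\R\mathcal{Z}_K$ only through faces $\sigma\in K$ with $I\subseteq\sigma$. The fixed set is therefore $\bigcup_{\sigma\in K,\,\sigma\supseteq I}\{x : x_i=0 \text{ for } i\in I,\ x_j\in S^0 \text{ for } j\notin\sigma\}$, which identifies it with $\R\mathcal{Z}_{\lk_K I}$ on the vertex set $[m]\setminus I$, and it is empty when $I\notin K$. This description is not needed for the lemma itself.
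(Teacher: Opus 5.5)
Your proof is correct and is essentially the paper's argument: one inclusion is immediate from $A\leqslant\operatorname{hull}(A)$, and for the other, a point fixed by $A$ has $x_i=0$ whenever $\mathrm{proj}_i(A)\neq\{1\}$, so it is fixed by every coordinate generator of $\operatorname{hull}(A)$. Your closing identification of the fixed set with $\R\mathcal{Z}_{\lk_K(I)}$ (empty when $I\notin K$) is also correct; it is the real analogue of \cref{lem_coordfixedpts}, which the paper does not need for this lemma.
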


\begin{proof}
Clearly $(\R\mathcal{Z}_K)^{\operatorname{hull}(A)} \subseteq (\R\mathcal{Z}_K)^A$ since $A\subseteq \operatorname{hull}(A)$. Conversely, suppose $(x_1,\ldots,x_m)\in \R\mathcal{Z}_K\subseteq (D^1)^m$ is fixed by every $a\in A$. Then $\mathrm{proj}_i(a)\cdot x_i=x_i$ for all $i\in [m]$ and $a\in A$, since the $A$-action on $\R\mathcal{Z}_K$ is a restriction of the coordinatewise $\Z_2^m$-action on $(D^1)^m$. It follows that $(x_1,\ldots,x_m)$ is fixed by each coordinate generator of $\operatorname{hull}(A)$, and hence lies in $(\R\mathcal{Z}_K)^{\operatorname{hull}(A)}$.
\end{proof}

The following is a real analogue of~\cite[Proposition~5.6]{AB}.

\begin{proposition} \label{prop_ef_hull}
Let $K$ be a simplicial complex on the vertex set $[m]$, and let $A$ be a subgroup of~$\Z_2^m$. Then $\R\mathcal{Z}_K$ is an equivariantly formal $A$-space over $\mathbb{F}_2$ if and only if $\R\mathcal{Z}_K$ is an equivariantly formal $\operatorname{hull}(A)$-space over $\mathbb{F}_2$.
\end{proposition}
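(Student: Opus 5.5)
The plan is to use the Betti-sum criterion for equivariant formality, \cref{thm_bettisumcrit}, which is stated in the appendix. For an elementary abelian $2$-group $G$ acting on a finite $G$-CW complex $X$, it says that $X$ is equivariantly formal over $\F_2$ if and only if
\[
\dim_{\F_2}H^*(X^G;\F_2)=\dim_{\F_2}H^*(X;\F_2).
\]
In general one only has the Smith--Thom inequality $\dim_{\F_2}H^*(X^G;\F_2)\leqslant\dim_{\F_2}H^*(X;\F_2)$, and equality characterizes formality. Both $A$ and $\operatorname{hull}(A)$ are elementary abelian $2$-groups. They act on the same finite cubical complex $\R\mathcal{Z}_K\subseteq (D^1)^m$ by restricting the coordinatewise reflection action, and this action is cellular for the cubical structure. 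So the hypotheses of \cref{thm_bettisumcrit} hold for both actions.

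The key step is now \cref{lem_fixedpts}. It gives an equality of fixed-point sets, $(\R\mathcal{Z}_K)^A=(\R\mathcal{Z}_K)^{\operatorname{hull}(A)}$, as subspaces of $\R\mathcal{Z}_K$. Consequently the two fixed-point sets have identical total $\F_2$-Betti numbers. The right-hand side of the criterion, $\dim_{\F_2}H^*(\R\mathcal{Z}_K;\F_2)$, does not depend on the group at all. Hence the equality required by \cref{thm_bettisumcrit} holds for $A$ exactly when it holds for $\operatorname{hull}(A)$, and the two formality conditions are equivalent.

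The main obstacle is making sure \cref{thm_bettisumcrit} applies as stated to an arbitrary subgroup $A\leqslant\Z_2^m$, and not only to coordinate subgroups. A non-coordinate $A$ acts on each cube by reflections in several coordinates at once, so I would check that its fixed points are a subcomplex and that the action is cellular. This holds once we pass to the natural refinement of the cubical structure in which each $D^1$ has the midpoint $0$ as a vertex: reflections permute the cells and fix pointwise every cell they map to itself. This should be a routine check. Note that the criterion needs only finiteness of $\R\mathcal{Z}_K$ and that $G$ is a $2$-torus; no rank or freeness conditions enter.
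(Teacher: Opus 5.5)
Your proof is correct and is the paper's argument exactly: \cref{lem_fixedpts} gives equal fixed-point sets for $A$ and $\operatorname{hull}(A)$, and the Betti sum criterion of \cref{thm_bettisumcrit} then makes the two formality conditions equivalent. Your cellularity check is right but not needed, since the appendix only asks that $X$ be compact, Hausdorff, second-countable and locally contractible, and the finite complex $\R\mathcal{Z}_K$ is all of these whatever subgroup acts on it.
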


\begin{proof}
Since $(\R\mathcal{Z}_K)^A = (\R\mathcal{Z}_K)^{\operatorname{hull}(A)}$ by \cref{lem_fixedpts}, the proposition follows from the Betti sum criterion for equivariant formality in \cref{thm_bettisumcrit}.
\end{proof}

\subsection{Detecting equivariant formality via fixed points}

We recall that for a simplicial complex $K$ on the vertex set $[m]$, the \emph{link} of a face $\sigma \in K$ is the subcomplex
\[
\lk_K(\sigma) = \{ \tau \in K \,:\, \tau \cap \sigma = \varnothing \text{ and } \tau \cup \sigma \in K  \}
\]
on the vertex set $[m] \setminus \sigma$.

Recall also that the ordinary moment-angle complex $\mathcal{Z}_K \subseteq (D^2)^m$ comes with a standard action of the $m$-torus $T^m$. The diagonal $\Z_2$-action on the polydisc $(D^2)^m$ defined by complex conjugation also restricts to an involution on $\mathcal{Z}_K$.

\begin{lemma} \label{lem_coordfixedpts}
Let $I \subseteq [m]$. Then the fixed point set $(\mathcal{Z}_K)^{T^I}$ is $\Z_2$-equivariantly homeomorphic to $\mathcal{Z}_{\lk_K(I)}$ with respect to the complex conjugation action if $I \in K$, and is empty otherwise.
\end{lemma}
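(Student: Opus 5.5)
I will compute the fixed points coordinatewise, using the description of $\mathcal{Z}_K$ as the union $\bigcup_{\sigma\in K}(D^2,S^1)^\sigma$. The coordinate circle $T^{\{i\}}$ acts on the $i$th factor of $(D^2)^m$ by rotation. Its fixed points in $D^2$ are just $\{0\}$, and $0\notin S^1$. Hence a point $z=(z_1,\ldots,z_m)\in(D^2)^m$ is fixed by $T^I$ if and only if $z_i=0$ for all $i\in I$.

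Now suppose $z\in\mathcal{Z}_K$, say $z\in(D^2,S^1)^\sigma$ with $\sigma\in K$. For $i\in I$ we have $z_i=0\notin S^1$, and this forces $i\in\sigma$. So $I\subseteq\sigma$, and therefore $I\in K$. This gives emptiness when $I\notin K$.

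When $I\in K$, I claim
\[
(\mathcal{Z}_K)^{T^I}=\{z\in\mathcal{Z}_K : z_i=0 \text{ for } i\in I\}=\bigcup_{\sigma\in K,\ \sigma\supseteq I}\{z\in(D^2,S^1)^\sigma : z_i=0 \text{ for } i\in I\}.
\]
The two descriptions agree by the argument above. I then write each face $\sigma\supseteq I$ as $\sigma=I\sqcup\tau$. Here $\tau\cap I=\varnothing$ and $\tau\cup I\in K$, which is exactly the condition $\tau\in\lk_K(I)$. Conversely, every $\tau\in\lk_K(I)$ arises in this way from $\sigma=\tau\cup I$.

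Define the map $(\mathcal{Z}_K)^{T^I}\to(D^2)^{[m]\setminus I}$ by forgetting the coordinates indexed by $I$. It sends the piece for $\sigma=I\sqcup\tau$ homeomorphically onto $(D^2,S^1)^\tau$, viewed inside $(D^2)^{[m]\setminus I}$. Therefore its image is $\bigcup_{\tau\in\lk_K(I)}(D^2,S^1)^\tau=\mathcal{Z}_{\lk_K(I)}$, regarded on the vertex set $[m]\setminus I$ as in the paper's convention for links.

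The map is a homeomorphism onto its image. It is the restriction of a coordinate projection, so it is continuous. Its inverse inserts zeros in the $I$-coordinates, which is also continuous. Both spaces are compact Hausdorff, so nothing further is needed.

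Complex conjugation acts diagonally and fixes $0$. So it preserves the subspace $\{z_i=0 \text{ for } i\in I\}$, and it commutes with both the projection and the zero-insertion. The homeomorphism is therefore $\Z_2$-equivariant.

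The only point requiring care is bookkeeping: the vertex set of $\lk_K(I)$ is $[m]\setminus I$ even though some of these vertices may be ghost vertices, and the identification of polyhedral products respects this. Apart from that I expect no real obstacle; the argument is a direct unwinding of the definitions.
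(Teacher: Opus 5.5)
Your proposal is correct and follows essentially the same route as the paper. Both arguments identify the $T^I$-fixed points of $(D^2)^m$ as the locus where $z_i=0$ for $i\in I$, then intersect with each $(D^2,S^1)^\sigma$ to see that only faces $\sigma\supseteq I$ contribute, and finally reindex these by $\tau=\sigma\setminus I\in\lk_K(I)$. Your explicit projection and zero-insertion maps, together with your remark about ghost vertices of the link on $[m]\setminus I$, spell out the identification that the paper states more briefly.
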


\begin{proof}
First note that $(x_1, \ldots, x_m) \in (D^2)^m$ is fixed by $T^I$ if and only if $x_i = 0$ for all $i \in I$. Let $J=[m] \setminus I$. Then the fixed point set of the $T^I$-action on $(D^2)^m$ is $(D^2,0)^{J}$. Hence,
\[
(\mathcal{Z}_K)^{T^I}=\mathcal{Z}_K\cap (D^2,0)^J = \bigcup_{\sigma\in K} \big( (D^2,S^1)^{\sigma} \cap (D^2,0)^J \big). 
\]
The intersection $(D^2,S^1)^{\sigma} \cap (D^2,0)^J$ is nonempty if and only if $I \subseteq \sigma$. Therefore, the fixed point set $(\mathcal{Z}_K)^{T^I}$ is nonempty if and only if $I \in K$.  For $I \in K$, we have 
\begin{equation} \label{fixedptsunion}
(\mathcal{Z}_K)^{T^I} = \bigcup_{\sigma\in K,\, I \subseteq \sigma} \big( (D^2,S^1)^{\sigma} \cap (D^2, 0)^J \big).
\end{equation}
For $\sigma\in K$ with $I \subseteq \sigma$, let $J'= \sigma\setminus I$ so that $\sigma = J' \sqcup I$. Observe that $J ' \in \lk_K(I)$, and every face of $\lk_K(I)$ arises in this way from some $\sigma\in K$ containing $I$. We have
\[
(D^2,S^1)^{\sigma} \cap (D^2,0)^J = \prod_{i \in J'} D^2 \times  \prod_{i \in I} \{0\} \times \prod_{i \notin \sigma} S^1 \cong \prod_{i \in J'}D^2 \times \prod_{i \notin \sigma}S^1,
\]
and since~\eqref{fixedptsunion} can be written as a union over all faces of the link $\lk_K(I)$, this gives the desired $\Z_2$-equivariant homeomorphism $(\mathcal{Z}_K)^{T^I} \cong \mathcal{Z}_{\lk_K(I)}$.
\end{proof}

\begin{theorem} \label{thm_ef}
Let $K$ be a simplicial complex on the vertex set $[m]$, and let $I \subseteq [m]$. Then the following conditions are equivalent:
\begin{enumerate}[label={\normalfont(\alph*)}]
\item \label{ef_a} $\R\mathcal{Z}_K$ is an equivariantly formal $\Z_2^I$-space over $\mathbb{F}_2$;
\item \label{ef_b} $\mathcal{Z}_K$ is an equivariantly formal $T^I$-space over $\mathbb{F}_2$;
\item \label{ef_c} $K_J\setminus(I\cap J) \hookrightarrow K_J$ induces the trivial map on $\widetilde{H}^*(-;\F_2)$ for all $J\subseteq [m]$.
\end{enumerate}
Moreover, if $K$ is flag, then the conditions above are equivalent to:
\begin{enumerate}[resume,label={\normalfont(\alph*)}]
\item \label{ef_d} $I\in K$ and $\{i,j_1\}, \{i,j_2\} \in K$ for every missing edge $\{j_1,j_2\} \notin K$ and every $i\in I\setminus\{j_1,j_2\}$.
\end{enumerate}
\end{theorem}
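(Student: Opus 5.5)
The plan is to establish \ref{ef_a}$\Leftrightarrow$\ref{ef_b} by a fixed-point comparison, and then to import the combinatorial criteria of~\cite{AB} for \ref{ef_b}$\Leftrightarrow$\ref{ef_c} and, in the flag case, \ref{ef_c}$\Leftrightarrow$\ref{ef_d}.

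For \ref{ef_a}$\Leftrightarrow$\ref{ef_b}, regard $\mathcal{Z}_K$ as a $T^I$-space equipped with the complex conjugation involution. This involution is compatible with the $T^I$-action, since it inverts torus elements. Its fixed point set is $\R\mathcal{Z}_K$, and on this real locus the $T^I$-action restricts to the $\Z_2^I$-action by coordinate reflections. I would apply \cref{thm_compatiblecrit} to this setup. If that theorem is stated as ``the $T^r$-action on $X$ is equivariantly formal iff the $\Z_2^r$-action on $X^{\Z_2}$ is,'' then \ref{ef_a}$\Leftrightarrow$\ref{ef_b} is immediate.

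If instead it requires a Betti-sum identification of fixed points, I would proceed through the Betti sum criterion \cref{thm_bettisumcrit}. Both actions are equivariantly formal exactly when the total $\F_2$-Betti number of the fixed set equals that of the space. For the torus side, \cref{lem_coordfixedpts} gives $(\mathcal{Z}_K)^{T^I}\cong \mathcal{Z}_{\lk_K(I)}$ when $I\in K$, and the fixed set is empty otherwise. For the real side, the same argument coordinatewise, with $0$ the unique fixed point of the reflection on $D^1$, gives $(\R\mathcal{Z}_K)^{\Z_2^I}\cong \R\mathcal{Z}_{\lk_K(I)}$ when $I\in K$, and empty otherwise. Hochster's formula then gives
\[
\dim H^*(\R\mathcal{Z}_L;\F_2)=\sum_{J}\dim \widetilde{H}^*(L_J;\F_2)\]
(with the $J=\varnothing$ term counted as $1$), and this agrees with $\dim H^*(\mathcal{Z}_L;\F_2)$. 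So the total Betti numbers of the spaces and of the fixed sets coincide on the real and complex sides, and the two Smith-type equalities are equivalent. The only subtlety is the empty-fixed-set case, where both actions fail to be formal unless the space is empty, which it is not.

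For \ref{ef_b}$\Leftrightarrow$\ref{ef_c}, I would invoke the general (non-flag) criterion of~\cite{AB} for coordinate torus actions on $\mathcal{Z}_K$, checking that it holds over $\F_2$. Alternatively, I would derive it from the Betti-sum equality written via Hochster: it amounts to comparing $\sum_J \widetilde{H}^*(\lk_K(I)_J)$ with $\sum_J \widetilde{H}^*(K_J)$, using the long exact sequences of the pairs $(K_J, K_J\setminus(I\cap J))$. This step is the main obstacle, since one must see that the inequality becomes an equality exactly when all the restriction maps vanish. The natural route is to filter by removing the vertices of $I$ one at a time, where each removal is a link/deletion decomposition, and to track dimensions.

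Finally, in the flag case, \ref{ef_b}$\Leftrightarrow$\ref{ef_d} is exactly \cref{thm_AB} over $k=\F_2$, once one notes the elementary translation: for flag $K$, the condition $K_{\{j_1,j_2\}}\ast K_{I\setminus\{j_1,j_2\}}\subseteq K$ for every missing edge $\{j_1,j_2\}$, together with $I\in K$, is equivalent to each $i\in I\setminus\{j_1,j_2\}$ being adjacent to both $j_1$ and $j_2$. This holds because a flag complex contains a set as soon as it contains all of its edges.
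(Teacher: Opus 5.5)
Your proposal is correct, and for \ref{ef_b}$\Leftrightarrow$\ref{ef_c} and \ref{ef_b}$\Leftrightarrow$\ref{ef_d} it matches the paper exactly. There you cite \cite[Theorem~B]{AB} and \cref{thm_AB}, with the same flag-complex translation, so your alternative filtering sketch is not needed.

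The genuine difference is in \ref{ef_a}$\Leftrightarrow$\ref{ef_b}. Your first option misstates \cref{thm_compatiblecrit}: it carries the hypotheses that $X$ and $X^T$ be equivariantly formal $\Z_2$-spaces. The paper checks these for $X=\mathcal{Z}_K$ and $X^{T^I}\cong\mathcal{Z}_{\lk_K(I)}$ using $(\mathcal{Z}_L)^{\Z_2}=\R\mathcal{Z}_L$, the Hochster equality $\dim H^*(\mathcal{Z}_L;\F_2)=\dim H^*(\R\mathcal{Z}_L;\F_2)$, and \cref{thm_bettisumcrit}. The proof of \cref{thm_compatiblecrit} then turns this back into Betti-sum equalities.

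Your fallback route skips that round trip. You compute $(\R\mathcal{Z}_K)^{\Z_2^I}\cong\R\mathcal{Z}_{\lk_K(I)}$ directly, by the coordinatewise argument of \cref{lem_coordfixedpts} with $0$ the only reflection-fixed point of $D^1$. You then observe that the two Smith-type equalities in \cref{thm_bettisumcrit} coincide, by Hochster applied to $L=K$ and to $L=\lk_K(I)$. Both fixed sets are empty exactly when $I\notin K$, so that case is automatic. You use the same ingredients as the paper, including connectedness of the coordinate-subtorus stabilizers for the torus version of \cref{thm_bettisumcrit}.

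Your version is shorter and self-contained. The paper's packaging gives \cref{thm_compatiblecrit} as a general principle for any $T$-space with a compatible involution, such as toric varieties with conjugation. In that generality the real fixed points $(X^{\Z_2})^{T_2}$ need not be directly computable, and one only knows $\Z_2$-equivariant formality of $X$ and $X^T$.
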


\begin{proof}
The equivalence of conditions~\ref{ef_b} and~\ref{ef_c} is contained in~\cite[Theorem~B]{AB}, and the equivalence of~\ref{ef_b} and~\ref{ef_d} when $K$ is flag is \cref{thm_AB}. (Note that since a flag complex is determined by its underlying graph,~\ref{ef_d} is equivalent to the condition that $I\in K$ and $K_{\{j_1,j_2\}}\ast K_{I\setminus \{j_1,j_2\}} \subseteq K$ for every $\{j_1,j_2\} \notin K$.) It therefore suffices to prove the equivalence of~\ref{ef_a} and~\ref{ef_b}.
 
Consider the complex conjugation $\Z_2$-action on $\mathcal{Z}_K$. It is easily checked that $(\mathcal{Z}_K)^{\Z_2} = \R\mathcal{Z}_K$. Moreover, it follows from the well-known additive isomorphisms~\cite[Corollary~8.3.6]{BP}
\[
H^n(\R\mathcal{Z}_K;\Z)\cong \bigoplus_{J\subseteq[m]} \widetilde{H}^{n-1}(K_J;\Z), \qquad H^n(\mathcal{Z}_K;\Z)\cong \bigoplus_{J\subseteq[m]} \widetilde{H}^{n-|J|-1}(K_J;\Z)
\]
that $\dim_{\F_2}H^*(\mathcal{Z}_K;\F_2) = \dim_{\F_2}H^*(\R\mathcal{Z}_K;\F_2)$. This, together with the fixed point criterion of \cref{thm_bettisumcrit}, implies that the $\Z_2$-action on $\mathcal{Z}_K$ is equivariantly formal. On the other hand, by \cref{lem_coordfixedpts}, we have that $(\mathcal{Z}_K)^{T^I} \cong \mathcal{Z}_{\lk_K(I)}$ as $\Z_2$-spaces if $I\in K$. So in this case the complex conjugation $\Z_2$-action on $(\mathcal{Z}_K)^{T^I}$ is equivariantly formal by the same argument as before, and \cref{thm_compatiblecrit} implies that $\R\mathcal{Z}_K$ is an equivariantly formal $\Z_2^I$-space over $\F_2$ if and only if $\mathcal{Z}_K$ is an equivariantly formal $T^I$-space over $\F_2$. Finally, if $I\notin K$, then the actions by the coordinate subgroups $\Z_2^I$ and $T^I$ on $\R\mathcal{Z}_K$ and $\mathcal{Z}_K$, respectively, have no fixed points. Therefore, neither action is equivariantly formal, making conditions~\ref{ef_a} and~\ref{ef_b} equivalent in this case as well.
\end{proof}

\begin{proof}[Proof of \cref{main_thm_2}]
Conditions~\ref{main_thm_2a} and~\ref{main_thm_2b} of \cref{main_thm_2} are equivalent by \cref{prop_ef_hull}, while the equivalence of~\ref{main_thm_2b} and~\ref{main_thm_2c} follows from \cref{thm_ef}.
\end{proof}

\section{Proof of Theorem 1.1} \label{sec_4}

In this short section we derive \cref{main_thm_1} from \cref{main_thm_2} and discuss some consequences.

\begin{proof}[Proof of \cref{main_thm_1}]
Let $G$ be a coabelian subgroup of a right-angled Coxeter group $W_\Gamma$. The elementary abelian $2$-group $A=G/[W_\Gamma,W_\Gamma]$, viewed as a subgroup of $W_\Gamma/[W_\Gamma,W_\Gamma]=\Z_2^m$, has a coordinate hull of the form $\operatorname{hull}(A)=\Z_2^I$ for some $I\subseteq [m]$.

We associate to $G$ another subgroup of $W_\Gamma$, namely the internal semidirect product
\[ G^\rtimes = \overline{W}_{\Gamma_I} \rtimes [W_{\Gamma_J},W_{\Gamma_J}], \]
where $\overline{W}_{\Gamma_I}$ is the normal closure of the standard parabolic subgroup $W_{\Gamma_I}$, and $J=[m]\setminus I$. Now let $K$ be the clique complex of the graph $\Gamma$. Then $(\R\mathcal{Z}_K)_A$ and $(\R\mathcal{Z}_K)_{\Z_2^I}$ are classifying spaces of $G$ and $G^\rtimes$, respectively, by \cref{prop_corresp} and \cref{lem_coord_quot}. Moreover, the Borel fibrations
\[
\R\mathcal{Z}_K \to (\R\mathcal{Z}_K)_A \to BA, \qquad \R\mathcal{Z}_K \to (\R\mathcal{Z}_K)_{\Z_2^I} \to B\Z_2^I
\]
can be identified, up to homotopy equivalence, with the homotopy fibrations obtained by applying the classifying space functor $B$ to the short exact sequences of groups
\[ 
1\to [W_\Gamma,W_\Gamma]\to G\to A\to 1, \qquad 
1\to [W_\Gamma,W_\Gamma]\to G^\rtimes\to \Z_2^I\to 1. 
\]
Since the set of homotopy classes of pointed maps $[BG,BA]$ (resp., $[BG^\rtimes,B\Z_2^I]$) is in bijection with $\operatorname{Hom}(G,A)$ (resp., $\operatorname{Hom}(G^\rtimes,\Z_2^I)$), this follows from the fact that the fibrations above induce these short exact sequences on fundamental groups, as can be seen from~\eqref{two_diagrams}, for example.

In particular, $H_A^*(\R\mathcal{Z}_K;\F_2)$ is isomorphic to the group cohomology $\mathrm{H}^*(G;\F_2)$ as a graded module over the polynomial ring $\mathrm{H}^*(A;\F_2)=H^*(BA;\F_2)$. Similarly, $H_{\Z_2^I}^*(\R\mathcal{Z}_K;\F_2) \cong \mathrm{H}^*(G^\rtimes;\F_2)$ as graded modules over $\mathrm{H}^*(\Z_2^I;\F_2) \cong \F_2[x_i : i\in I]$. Under these correspondences, the equivalence of conditions~\ref{main_thm_1a},~\ref{main_thm_1b}, and~\ref{main_thm_1c} in \cref{main_thm_1} now follows from the equivalence of the corresponding conditions in \cref{main_thm_2}.
\end{proof}

The cohomology of a group is called \emph{Cohen--Macaulay} of dimension $r$ if $\mathrm{H}^*(G;k)$ is finitely generated and free over a polynomial subalgebra $k[x_1,\ldots,x_r] \subseteq \mathrm{H}^*(G;k)$.

\begin{corollary}
Let $\Gamma$ be a simple graph on $m$ vertices, and let $G$ be a coabelian subgroup of $W_\Gamma$. If the equivalent conditions of \cref{main_thm_1} hold, then $\mathrm{H}^*(G;\F_2)$ is Cohen--Macaulay of dimension $m-\operatorname{corank}(G)$.
\end{corollary}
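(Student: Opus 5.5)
Here $\operatorname{corank}(G)$ is the rank of $W_\Gamma/G$, so that $A=G/[W_\Gamma,W_\Gamma]$ is a subgroup of $\Z_2^m$ with $A\cong\Z_2^r$, where $r=m-\operatorname{corank}(G)$. The plan is to take the polynomial subalgebra to be $\mathrm{H}^*(A;\F_2)\cong\F_2[x_1,\ldots,x_r]$, which maps into $\mathrm{H}^*(G;\F_2)$ through the projection $G\to A$. Two things then need to be checked: this map is injective, so that $\F_2[x_1,\ldots,x_r]$ really is a polynomial subalgebra; and $\mathrm{H}^*(G;\F_2)$ is finitely generated and free over it.

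Freeness is exactly condition~\ref{main_thm_1a} of \cref{main_thm_1}, which we are assuming. Injectivity of $\mathrm{H}^*(A;\F_2)\to\mathrm{H}^*(G;\F_2)$ then follows from freeness. Indeed, by the proof of \cref{main_thm_1}, $\mathrm{H}^*(G;\F_2)\cong H^*_A(\R\mathcal{Z}_K;\F_2)$ as $\mathrm{H}^*(BA;\F_2)$-modules, where $K$ is the clique complex of $\Gamma$. This module is nonzero, because it contains $1$ in degree $0$. A nonzero free module over a domain is faithful, so the unit map is injective. Alternatively, equivariant formality gives collapse of the Serre spectral sequence, so the edge map $H^*(BA)\to H^*_A$ is injective.

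For finite generation, I would use the additive isomorphism $H^*_A(\R\mathcal{Z}_K;\F_2)\cong H^*(BA;\F_2)\otimes H^*(\R\mathcal{Z}_K;\F_2)$. This holds under equivariant formality (see \cref{thm_bettisumcrit}, which gives collapse of the Serre spectral sequence together with trivial action on the fibre cohomology). Since $\R\mathcal{Z}_K$ is a finite cubical complex, $H^*(\R\mathcal{Z}_K;\F_2)$ is finite dimensional. Hence the free module has finite rank, namely $\dim_{\F_2}H^*(\R\mathcal{Z}_K;\F_2)$, and $\mathrm{H}^*(G;\F_2)$ is finitely generated over the polynomial ring. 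In particular, it is finitely generated as an algebra.

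The only point needing care is that "free" in \cref{main_thm_1}\ref{main_thm_1a} does not by itself say "of finite rank". I would handle this with the fibre argument above: a set of free generators can be chosen as lifts of a basis of $E_\infty^{0,*}=H^*(\R\mathcal{Z}_K;\F_2)$, which is finite. Combining these facts shows that $\mathrm{H}^*(G;\F_2)$ is Cohen--Macaulay of dimension $r=m-\operatorname{corank}(G)$.
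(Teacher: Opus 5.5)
Your proof is correct and follows the same route the paper intends. The paper gives no separate proof and treats the corollary as immediate from \cref{main_thm_1}\ref{main_thm_1a}, with $\mathrm{H}^*(G/[W_\Gamma,W_\Gamma];\F_2)\cong\F_2[x_1,\ldots,x_r]$ and $r=m-\operatorname{corank}(G)$; your injectivity check and your finite-rank argument (collapse of the Serre/Lyndon--Hochschild--Serre spectral sequence plus finite-dimensionality of $H^*(\R\mathcal{Z}_K;\F_2)$, as in the paper's next corollary) supply exactly the details it leaves implicit.
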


Under the assumptions above, the rank of the free module $\mathrm{H}^*(G;\F_2)$ over the polynomial ring $\mathrm{H}^*(G/[W_\Gamma,W_\Gamma];\F_2)$ can be described in terms of the well-known cohomology of $[W_\Gamma,W_\Gamma]$:

\begin{corollary}
Let $G$ be a coabelian subgroup of $W_\Gamma$, and let $r=\operatorname{rank}(G/[W_\Gamma,W_\Gamma])$. If the equivalent conditions of \cref{main_thm_1} hold, then there is an isomorphism of $\F_2[x_1,\ldots,x_r]$-modules
\[
\mathrm{H}^*(G;\F_2)\cong \Biggl( \bigoplus_{J\subseteq[m]} \widetilde{H}^{*-1}(K_J;\F_2) \Biggr) \otimes_{\F_2} \F_2[x_1,\ldots,x_r],
\]
where $K$ is the flag complex of $\Gamma$, and $K_J$ is the full subcomplex of $K$ on the vertex set $J\subseteq[m]$.
\end{corollary}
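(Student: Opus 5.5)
Under the equivalent conditions of \cref{main_thm_1}, \cref{main_thm_2} makes the $A$-action on $\R\mathcal{Z}_K$ equivariantly formal over $\F_2$, where $A=G/[W_\Gamma,W_\Gamma]\cong\Z_2^r$ and $K$ is the flag complex of $\Gamma$. By the proof of \cref{main_thm_1}, $\mathrm{H}^*(G;\F_2)\cong H_A^*(\R\mathcal{Z}_K;\F_2)$ as graded modules over $H^*(BA;\F_2)\cong\F_2[x_1,\ldots,x_r]$. So it suffices to identify $H_A^*(\R\mathcal{Z}_K;\F_2)$ as a module over this ring, and I would do this through the Serre spectral sequence of the Borel fibration $\R\mathcal{Z}_K\to(\R\mathcal{Z}_K)_A\to BA$.

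The plan has three steps.
\begin{enumerate}
\item By the equivalent description of equivariant formality recalled in the introduction (\cref{thm_bettisumcrit}), $A$ acts trivially on $H^*(\R\mathcal{Z}_K;\F_2)$ and the spectral sequence collapses at $E_2$. Triviality of the action means there are no twisted coefficients, so
\[ E_2=E_\infty=\F_2[x_1,\ldots,x_r]\otimes_{\F_2}H^*(\R\mathcal{Z}_K;\F_2). \]
\item I pass from $E_\infty$ to the module itself. Collapse is equivalent to surjectivity of the restriction $H_A^*(\R\mathcal{Z}_K;\F_2)\to H^*(\R\mathcal{Z}_K;\F_2)$. I choose a homogeneous $\F_2$-linear section $s$ and form the $\F_2[x_1,\ldots,x_r]$-linear map
\[ \F_2[x_1,\ldots,x_r]\otimes H^*(\R\mathcal{Z}_K;\F_2)\longrightarrow H_A^*(\R\mathcal{Z}_K;\F_2). \]
This map respects the Serre filtration, and its associated graded map is the identity onto $E_\infty$. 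The filtration is finite in each degree, since $\R\mathcal{Z}_K$ is a finite complex, so the map is an isomorphism. This is the Leray--Hirsch argument.
\item I substitute the additive decomposition quoted from \cite[Corollary~8.3.6]{BP} in the proof of \cref{thm_ef}. With field coefficients, universal coefficients give $H^n(\R\mathcal{Z}_K;\F_2)\cong\bigoplus_J\widetilde{H}^{n-1}(K_J;\F_2)$. Indeed, the integral splitting comes from a stable or cellular splitting that holds for any coefficients, or one can run the Hochster-type argument directly over $\F_2$.
\end{enumerate}

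The main obstacle is making sure the $\F_2$ version of the Hochster decomposition holds, rather than just comparing total dimensions. If in doubt, I would invoke the stable splitting of $\R\mathcal{Z}_K$ as a wedge of suspensions $\Sigma|K_J|$, which gives the decomposition over any coefficient ring. A secondary point is to keep the degree shift consistent: generators of $\widetilde{H}^{n-1}(K_J)$ sit in degree $n$, which is what the notation $\widetilde{H}^{*-1}$ records.
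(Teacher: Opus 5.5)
Your proposal is correct and follows the same route as the paper. The paper deduces the isomorphism from the collapse of the Lyndon--Hochschild--Serre spectral sequence of $1\to[W_\Gamma,W_\Gamma]\to G\to G/[W_\Gamma,W_\Gamma]\to 1$, which is your Serre spectral sequence of the Borel fibration, with untwisted coefficients by \cref{thm_bettisumcrit}; it combines this with the $\F_2$ Hochster decomposition of $H^*(\R\mathcal{Z}_K;\F_2)$ from \cite[Corollary~8.3.6]{BP}. Your Leray--Hirsch step and your universal-coefficients justification of the $\F_2$ decomposition just make explicit what the paper leaves implicit; note that the Serre filtration is finite in each degree for any first-quadrant spectral sequence, so you do not need finiteness of $\R\mathcal{Z}_K$ for that step.
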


\begin{proof}
This follows from the collapse of the Lyndon--Hochschild--Serre spectral sequence of the short exact sequence $1\to [W_\Gamma,W_\Gamma]\to G\to G/[W_\Gamma,W_\Gamma]\to 1$ (see also \cref{thm_bettisumcrit}) and the Hochster decomposition $\mathrm{H}^n([W_\Gamma,W_\Gamma];\F_2)\cong H^n(\R\mathcal{Z}_K;\F_2)\cong \bigoplus_{J\subseteq[m]} \widetilde{H}^{n-1}(K_J;\F_2)$ of~\cite[Corollary~8.3.6]{BP}.
\end{proof}

\appendix
\section{Betti number criteria for equivariant formality} \label{app_A}

Here we collect various general criteria for equivariant formality over $\F_2$ (\cref{thm_bettisumcrit}) and prove a general result (\cref{thm_compatiblecrit}) on compatible group actions that allows us to relate the equivariant formality of certain torus and elementary abelian $2$-group (or ``$2$-torus'') actions over~$\F_2$. The arguments are standard and follow~\cite{AP}.

We use the same assumptions on topological spaces as in~\cite{AFP}: all spaces $X$ are assumed to be compact, Hausdorff, second-countable, and locally contractible. Examples include topological manifolds, complex algebraic varieties, and countable, locally finite CW-complexes. Furthermore, we let $T=(S^1)^r$ and assume that \emph{all stabilizers $T_x$ are connected} for any $T$-space $X$.

Let $X$ be a $T$-space and let $k$ be a field. Let $R=H^*(BT;k)$ and let $p^*\colon R \to H^*_T(X;k)$ be the map induced by the projection map $p\colon X_T \to BT$. For each orbit $Tx$, we have $(Tx)_T \simeq BT_x$, and hence the inclusion $j_x\colon Tx \hookrightarrow X$ induces a map $j_x^* \colon H^*_T(X;k) \to H^*(BT_x;k)$. For each point $x \in X$, we thus have a map $j_x^*\circ p^* \colon R \to H^*(BT_x;k)$. 
For a multiplicative set $S \subseteq R$, define 
\[ 
X^S = \big\{ x \in X \,:\, (j_x^*\circ p^*)(s) \neq 0 \text{ for all } s \in S \big\}.
\]
With these conventions, we can now state the localization theorem.

\begin{theorem}[{Localization Theorem~\cite[Theorem~3.2.6]{AP}}] \label{thm_localization}
Let $k=\mathbb{Q}$ or $\F_p$, where $p$ is prime. Let $\phi^*\colon H^*_T(X;k) \to H_T^*(X^S;k)$ be the map induced by the inclusion $\phi\colon X^S \hookrightarrow X$. Then the induced localized homomorphism
\[
S^{-1}\phi^* \colon S^{-1} H_T^*(X;k) \longrightarrow S^{-1} H_T^*(X^S;k)
\]
is an isomorphism of $S^{-1}R$-algebras.
\end{theorem}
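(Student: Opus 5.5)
The localization theorem in this form is the standard one of Allday--Puppe, so I would follow their strategy: reduce to showing that the localized $S^{-1}R$-module $S^{-1}H_T^*(X,X^S;k)$ vanishes. Since $S^{-1}(-)$ is exact and the long exact sequence of the pair $(X,X^S)$ in equivariant cohomology consists of $R$-modules, that vanishing implies $S^{-1}\phi^*$ is an isomorphism. It is then automatically multiplicative, because $\phi^*$ is a ring map and localization preserves algebra structures.

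The first step is to replace $X^S$ by a closed $T$-invariant neighbourhood. The set $X^S$ is $T$-invariant, since the map $j_x^*\circ p^*$ depends only on the stabilizer $T_x$, which is constant along orbits. It is also closed: by the slice theorem and compactness there are only finitely many stabilizer types, and stabilizers near $x$ are subgroups of $T_x$. Under our hypotheses (compact, Hausdorff, locally contractible), \v{C}ech and singular cohomology agree and equivariant cohomology is continuous (tautness). Hence $H_T^*(X^S)\cong\varinjlim_U H_T^*(U)$ over closed invariant neighbourhoods $U$ of $X^S$.

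By excision together with this tautness, it suffices to show that $S^{-1}H_T^*(X\setminus V, \partial)$ vanishes for small open invariant neighbourhoods $V$ of $X^S$; equivalently, that $S^{-1}H_T^*(Y)=0$ for compact invariant $Y$ disjoint from $X^S$, together with the analogous statement for relative pairs. To get this, cover $Y$ by finitely many tubes $T\times_{T_y}D_y$, which is possible by the slice theorem and compactness. Each tube has $H_T^*(T\times_{T_y} D_y)\cong H^*(BT_y;k)$ as an $R$-module. Since $y\notin X^S$, some $s\in S$ maps to $0$ in $H^*(BT_y;k)$, so that module is killed by $s$ and its localization vanishes. A Mayer--Vietoris induction over the finite cover, using that intersections of tubes are again invariant sets whose points lie outside $X^S$ and which are covered by tubes, shows that $H_T^*(Y)$ is annihilated by a product of elements of $S$. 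Therefore $S^{-1}H_T^*(Y)=0$.

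The main obstacle is the point-set step that passes from this local vanishing to the global statement about the pair $(X,X^S)$. This requires continuity of equivariant cohomology and a uniform bound for the Mayer--Vietoris induction. I would handle the latter by noting that the induction only ever involves finitely many sets, so the annihilating product in $S$ is finite. The connected-stabilizer hypothesis is used to identify $H_T^*$ of each tube with $H^*(BT_y;k)$ as a polynomial ring, so that the vanishing of $j_y^*p^*(s)$ is meaningful and well-behaved. The choice $k=\mathbb{Q}$ or $\F_p$ matters only in that $H^*(BT_y;k)$ is a polynomial ring over a field.
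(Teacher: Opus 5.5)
The paper gives no proof of this statement: it quotes Allday--Puppe and uses the result as a black box in the appendix. Your outline is the standard proof from that source and is correct in structure: reduce to $S^{-1}H_T^*(X,X^S;k)=0$, replace $X^S$ by invariant neighbourhoods via tautness, and kill everything away from $X^S$ using slice tubes and a finite Mayer--Vietoris argument. Supplying it makes the appendix self-contained. One simplification: $S^{-1}$ commutes with direct limits, so each $H_T^*(X,U;k)$ only needs to be $S$-torsion, and no bound uniform in $U$ is required.

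Several supporting claims need repair.

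(i) Compactness and the slice theorem do not give finitely many stabilizer types, because inside one tube the stabilizers can run through infinitely many subgroups of $T_x$. For example, take discs of radius $1/n$ in mutually orthogonal planes, joined at their centres, with $T^2$ rotating the $n$th disc through the character $(z_1,z_2)\mapsto z_1^nz_2^{-1}$. This satisfies all the standing hypotheses and has infinitely many connected stabilizers $\{z_2=z_1^n\}$. You do not need finiteness: closedness of $X^S$ follows from $T_y\leqslant T_x$ for $y$ near $x$ alone.

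(ii) The identification $H_T^*(T\times_{T_y}D_y;k)\cong H^*(BT_y;k)$ needs an acyclic slice, which the hypotheses do not obviously provide. What is true, and all you need, is that every invariant subset $Z$ of the tube maps equivariantly to $T/T_y$. Hence $p^*\colon R\to H_T^*(Z;k)$ factors through $H_T^*(T/T_y;k)\cong H^*(BT_y;k)$, so $s$ annihilates $H_T^*(Z;k)$. This is also the correct base case for your Mayer--Vietoris induction. The base case must handle arbitrary invariant subsets of a single tube, such as $Y\cap U_i\cap U_j$, because re-covering intersections by fresh tubes only regenerates the problem.

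(iii) Neither connectedness of stabilizers nor the choice of $k$ plays any role in the localization theorem. In the paper, connectedness enters only in \cref{S-localization}, to get $X^S=X^T$ for $S=R\setminus\{0\}$. Over $\F_p$, a point whose stabilizer is the $p$-torsion subgroup $\{g\in T: g^p=1\}$ lies in $X^S$ without being fixed.

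(iv) Tautness for Borel cohomology is proved by approximating $ET$ by compact spaces $E_nT$. If you work with singular cohomology, it must agree with \v{C}ech cohomology on $X^S$ as well as on $X$; Allday--Puppe sidestep this by using \v{C}ech cohomology.
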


The next lemma is the key point, where we use that all stabilizers are connected. 

\begin{lemma} \label{S-localization}
    Let $S = R \setminus \{0\}$ be the multiplicative set of $R$, and let $X^T$ be the fixed points of the $T$-space $X$. Then $X^S = X^T$. 
\end{lemma}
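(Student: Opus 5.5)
We show both inclusions; everything reduces to understanding the restriction maps $R \to H^*(BT_x;k)$ for the closed subgroups $T_x$.

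Fix $x\in X$. By hypothesis the stabilizer $T_x$ is a closed connected subgroup of $T=(S^1)^r$, hence a subtorus, say $T_x\cong (S^1)^s$. The composite $j_x^*\circ p^*\colon R\to H^*(BT_x;k)$ is induced by the orbit map $(Tx)_T\simeq BT_x\to BT$. We first check that this is homotopic to $B\iota$, where $\iota\colon T_x\hookrightarrow T$ is the inclusion. This is the standard identification $(T/T_x)_T = ET\times_T T/T_x \simeq ET/T_x$, under which the projection to $BT$ becomes the map $ET/T_x\to ET/T$ induced by $\iota$. Hence $j_x^*\circ p^* = (B\iota)^*\colon k[t_1,\dots,t_r]\to k[u_1,\dots,u_s]$.

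Since $T_x$ is a subtorus, $\iota$ is split up to isomorphism: we may choose a complementary subtorus and coordinates so that $T=T_x\times T'$. Then $(B\iota)^*$ is, after a linear change of generators of $R$ over $\mathbb{Z}$ (which remains valid over any $k$), the surjection $k[u_1,\dots,u_s,v_1,\dots,v_{r-s}]\to k[u_1,\dots,u_s]$ sending each $v_j$ to $0$. This is precisely where connectedness is used: for a finite or disconnected stabilizer, the restriction map in characteristic $p$ could kill linear forms without having a nonzero kernel of the required kind. In our situation the kernel is the ideal $(v_1,\dots,v_{r-s})$.

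\emph{Fixed points lie in $X^S$.} If $x\in X^T$, then $T_x=T$, so $(B\iota)^*$ is the identity on $R$. It follows that no nonzero $s\in S$ is killed, and so $x\in X^S$.

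\emph{Points of $X^S$ are fixed.} Conversely, if $x\notin X^T$, then $s<r$, and the nonzero element $v_1\in S$ maps to $0$. Hence $x\notin X^S$.

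Together these give $X^S=X^T$. The only step requiring care is the identification of $j_x^*\circ p^*$ with $(B\iota)^*$ and the splitting of the subtorus over $\mathbb{Z}$. The latter holds because a connected closed subgroup of a torus is a subtorus whose character lattice is a direct summand. With both of these in hand, the remaining steps are immediate.
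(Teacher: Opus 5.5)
Your proof is correct and follows the same route as the paper: for fixed points $j_x^*\circ p^*=\mathrm{id}$, and for $x\notin X^T$ connectedness makes $T_x$ a proper subtorus, so $R\to H^*(BT_x;k)$ has nonzero kernel. The only difference is that you exhibit an explicit kernel element $v_1$ via a splitting $T=T_x\times T'$, where the paper simply compares ranks.

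One correction to your aside on why connectedness matters: the danger with disconnected stabilizers is not that linear forms get killed, but that nothing is killed. For example, for $\Z_p\subset S^1$ over $\F_p$ the map $\F_p[t]\to H^*(B\Z_p;\F_p)$ is injective, so a non-fixed point would lie in $X^S$.
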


\begin{proof}
    Let $x \in X^T$. Then $j_x^*\circ p^* = \mathrm{id}$ and hence $x \in X^S$. Conversely, suppose $x \notin X^T$. Since all stabilizers are connected, $T_x$ is a subtorus of $T$ and $T_x \neq T$, hence $\rk T_x < \rk  T$. Therefore,  $\ker(j_x^*\circ p^*) \neq 0 $, but this implies $x \notin X^S$ since $S = R \setminus \{0\}$.   
\end{proof}

\begin{proposition} \label{SScollapse}
    Let $X$ be a $T$-space with $\dim_{\mathbb{F}_p}H^*(X; \mathbb{F}_p) < \infty$. Then 
    \[
    \dim_{\mathbb{F}_p}H^*(X^T;\mathbb{F}_p) \leqslant \dim_{\mathbb{F}_p}H^*(X;\mathbb{F}_p).
    \]
    Moreover, the Serre spectral sequence $E_2^{i,j} = H^i(BT, H^j(X; \mathbb{F}_p)) \Rightarrow H_T^{i+j}(X;\F_p)$ of the fibration $X \to X_T \to BT$ degenerates at $E_2$ if and only if $\dim_{\F_p}H^*(X^T;\F_p) =\dim_{\F_p}H^*(X;\F_p)$.
\end{proposition}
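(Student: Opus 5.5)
We will deduce both claims from the Localization Theorem (\cref{thm_localization}) with $S=R\setminus\{0\}$, where $R=H^*(BT;\F_p)=\F_p[t_1,\ldots,t_r]$ is an integral domain. Write $Q=S^{-1}R$ for its fraction field. By \cref{S-localization}, $X^S=X^T$, so restriction induces an isomorphism of $Q$-vector spaces
\[
Q\otimes_R H_T^*(X;\F_p)\;\cong\; Q\otimes_R H_T^*(X^T;\F_p).
\]
Since $T$ acts trivially on $X^T$, we have $H_T^*(X^T;\F_p)\cong R\otimes_{\F_p} H^*(X^T;\F_p)$, a free $R$-module. Its rank is $\dim H^*(X^T;\F_p)$, so the right-hand side has $Q$-dimension $\dim_{\F_p}H^*(X^T;\F_p)$. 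Thus the whole proof reduces to comparing $\dim_Q Q\otimes_R H_T^*(X)$ with $\dim_{\F_p}H^*(X)$. Note that $X^T$ is closed in $X$, so it satisfies the standing hypotheses; we should nevertheless check that $H^*(X^T)$ is finite, and this will follow from the inequality once the left side is shown to be finite.

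For that comparison we use the Serre spectral sequence. Since $BT$ is simply connected, $E_2=R\otimes H^*(X;\F_p)$ is a finitely generated free $R$-module of rank $N=\dim H^*(X;\F_p)$. Each $E_{s+1}$ is a subquotient of $E_s$ as an $R$-module, so every $E_s$ is finitely generated. By Noetherianity the sequence stabilizes at a finite page, because $H^*(X)$ is finite-dimensional and so only finitely many differentials can be nonzero. Hence $H_T^*(X)$ is finitely generated, with a finite filtration whose associated graded is $E_\infty$.

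Localization is exact, so
\[
\dim_Q Q\otimes H_T^*(X)=\dim_Q Q\otimes E_\infty\leq \dim_Q Q\otimes E_2=N .
\]
Each page's $Q$-dimension is at most the previous one, since it is the homology of a complex on the previous page. This gives the inequality.

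For the "moreover" part, first suppose the sequence degenerates at $E_2$. Then $E_\infty=E_2$, and the equality of $Q$-dimensions gives $\dim H^*(X^T)=N$. Conversely, suppose equality holds. Then $\dim_Q Q\otimes E_\infty=\dim_Q Q\otimes E_2$, and this forces every localized differential $Q\otimes d_s$ to vanish. The key point is that on $E_2$, which is $R$-free and hence torsion-free, a map $d_2$ whose localization is zero must itself be zero. To see this, note that $d_2$ lands in the torsion-free module $E_2$, so its image is torsion-free; an image killed by localization is torsion, and is therefore zero.

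Iterating gives $E_3=E_2$, which is again free. Then $Q\otimes d_3=0$ forces $d_3=0$ by the same argument, and so on through all pages. Hence the sequence degenerates at $E_2$.

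The main obstacle is precisely this iteration: at each stage one must know that the current page is torsion-free. That holds inductively, because once all earlier differentials vanish the current page equals $E_2$, which is free. The remaining care is the rank bookkeeping under localization, where only exactness of $Q\otimes_R-$ and the fact that homology can only drop dimension are used.
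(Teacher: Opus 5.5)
Your proof is correct and follows the paper's route. You localize the Serre spectral sequence at $S=R\setminus\{0\}$, use that the $Q$-dimension can only drop from $S^{-1}E_2\cong Q\otimes_{\F_p}H^*(X;\F_p)$ to $S^{-1}E_\infty$, and identify the latter with $\dim_{\F_p}H^*(X^T;\F_p)$ via the localization theorem and $X^S=X^T$. Your torsion-freeness induction (a free page whose localized differential vanishes has zero differential) makes explicit the step behind the paper's closing ``equality if and only if $E_2=E_\infty$'', which the paper leaves implicit. The one slip is your aside that $X^T$ inherits the standing hypotheses because it is closed: closed subspaces need not be locally contractible, but your argument never actually uses this.
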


\begin{proof}
Let $R = H^*(BT;\mathbb{F}_p)$. Since the rows of $E^{*,*}_r$ are $R$-modules and the differentials $d_r$, $r \geqslant 2$, are $R$-module homomorphisms, one can localize the spectral sequence with respect to $S = R \setminus \{0\}$. Let $K = R_{(0)}$ be the field of fractions. The spectral sequence $\{S^{-1}E_r\}_{r\geqslant 2}$ becomes a sequence of complexes of
vector spaces over $K$ with $S^{-1}E_2 \cong K \otimes_{\mathbb{F}_p} H^*(X;\mathbb{F}_p)$. Now we have 
\[
\begin{aligned}
 \dim_{\mathbb{F}_p}H^*(X;\mathbb{F}_p)
  &= \dim_{K}(S^{-1}E_2)
   \geqslant \dim_{K}(S^{-1}E_3) \geqslant\cdots \\
  &\geqslant \dim_{K}(S^{-1}E_\infty)
   = \dim_{K}\bigl(S^{-1}H_T^*(X;\mathbb{F}_p)\bigr) \\
  &= \dim_{K}\bigl(S^{-1}H_T^*(X^T;\mathbb{F}_p)\bigr)
   = \dim_{\mathbb{F}_p}H^*(X^T;\mathbb{F}_p),
\end{aligned}
\]
where the equality $\dim_{K}\bigl(S^{-1}H_T^*(X;\mathbb{F}_p)\bigr) = \dim_{K}\bigl(S^{-1}H_T^*(X^T;\mathbb{F}_p)\bigr)$ follows from \cref{thm_localization} and \cref{S-localization}. Therefore, $\dim_{\mathbb{F}_p}H^*(X^T;\mathbb{F}_p) \leqslant \dim_{\mathbb{F}_p}H^*(X; \mathbb{F}_p)$ with equality if and only if $E_2 = E_\infty$.
\end{proof}

We next recall a general criterion for equivariant formality which generalizes the Smith--Thom inequality for spaces with an involution. For a $G$-space $X$, let $i^*\colon H^*_G(X;\F_2) \to H^*(X;\F_2)$ denote the forgetful map induced by the fibre inclusion of the Borel fibration $X\xrightarrow{i} X_G\to BG$.

\begin{theorem} \label{thm_bettisumcrit}
Let $G=T^r$ or $\mathbb{Z}_2^r$. Let $X$ be a $G$-space with the same assumptions as in the beginning of this section. 
Then
\[
\dim_{\mathbb{F}_2}H^*(X^G;\mathbb{F}_2) \leqslant \dim_{\mathbb{F}_2}H^*(X;\mathbb{F}_2).
\]
Moreover, $\dim_{\F_2}H^*(X^G;\F_2) = \dim_{\F_2}H^*(X;\F_2)$ if and only if the following equivalent conditions hold:
\begin{enumerate}[label={\normalfont(\alph*)}]
    \item $i^* \colon H_G^*(X; \mathbb{F}_2) \to H^*(X;\mathbb{F}_2)$ is surjective;
    \item $H_G^*(X; \mathbb{F}_2)$ is a free module over the polynomial ring $H^*(BG; \mathbb{F}_2)$;
    \item $G$ acts trivially on $H^*(X;\mathbb{F}_2)$, and the Serre spectral sequence with mod $2$ coefficients associated to the fibration $X\to X_G \to BG$ degenerates at the second page.
\end{enumerate}
\end{theorem}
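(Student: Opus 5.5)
The plan is to treat the two cases $G=T^r$ and $G=\Z_2^r$ in parallel, using the localization machinery already set up in the appendix. For $G=T^r$ the inequality and the equivalence of equality with degeneration of the Serre spectral sequence (with $\F_2$ coefficients) is exactly \cref{SScollapse} with $p=2$. For $G=\Z_2^r$ I would run the same argument with $R=H^*(B\Z_2^r;\F_2)=\F_2[x_1,\ldots,x_r]$ and $S=R\setminus\{0\}$. This requires the localization theorem for $2$-tori, which is the standard Quillen--Hsiang version in \cite{AP}. I would also need the analogue of \cref{S-localization}: $X^S=X^G$. This holds because for $x\notin X^G$ the stabilizer $G_x$ is a proper subgroup, so $H^*(BG;\F_2)\to H^*(BG_x;\F_2)$ kills some nonzero linear form. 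No connectivity hypothesis is needed here, since every subgroup of $\Z_2^r$ is again a $2$-torus. Localizing the spectral sequence at $S$ and comparing $K$-dimensions as in the proof of \cref{SScollapse} then gives the inequality. It also shows that equality holds if and only if $S^{-1}E_2=S^{-1}E_\infty$.

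The main obstacle is the local coefficient system. When $G=\Z_2^r$ (or more generally when the action on cohomology is nontrivial), $E_2=H^*(BG;\mathcal H^*(X))$ is not $R\otimes H^*(X)$. So I must first show that equality forces the action on $H^*(X;\F_2)$ to be trivial. I would handle this by working with the localized ranks directly. The $K$-dimension of $S^{-1}H^*(BG;M)$ for a finite $\F_2[G]$-module $M$ equals $\dim M^G$-type invariants only if $M$ is trivial. In general, $H^*(BG;M)$ localized has rank at most $\dim M$, with equality iff $M$ is a trivial (or at least free-over-$R$ in high degrees) module. For a $2$-group I would reduce to $r=1$ via a filtration and use that $H^{\ge1}(\Z_2;M)$ is periodic of dimension $\dim\ker(1+g)/\operatorname{im}(1+g)<\dim M$ unless $g$ acts trivially. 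For the torus case the group is connected, so the action is automatically trivial.

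Finally I would prove the equivalence of (a), (b), (c) with equality. Equality gives trivial action and $E_2=E_\infty$, which is (c). Given (c), $E_\infty=R\otimes H^*(X)$ is free, and by Leray--Hirsch, choosing lifts of a basis of $H^*(X)$, $H_G^*(X)$ is free over $R$, which is (b). From (b), the freeness of $H^*_G(X)$ of rank equal to its localized rank, which equals $\dim H^*(X^G)$, together with the edge map being surjective shows $i^*$ is onto, giving (a). Conversely, (a) yields Leray--Hirsch directly, so $E_2$ has trivial coefficients and collapses. That gives (c), hence equality of localized ranks $\dim H^*(X)=\dim H^*(X^G)$.
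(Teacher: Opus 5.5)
You have the right overall strategy for the $\Z_2^r$ case: a self-contained localization and rank count with twisted coefficients, where the paper essentially just cites \cite[Theorem~3.10.4]{AP} and \cite[Theorems~8.3 and~10.2]{AFP}. However, your implication (b)$\Rightarrow$(a) in the last paragraph is circular. The map $i^*$ \emph{is} the edge homomorphism $H^*_G(X)\to E_\infty^{0,*}\subseteq E_2^{0,*}=H^*(X)^G\subseteq H^*(X)$. It is always onto $E_\infty^{0,*}$, and its surjectivity onto $H^*(X)$ is precisely statement (a), so you cannot invoke ``the edge map being surjective''. Freeness plus localization only gives $\operatorname{rank}_R H^*_G(X)=\dim H^*(X^G)$, and nothing in your rank bookkeeping relates a free $H^*_G(X)$ back to $H^*(X)$. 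So you have proved equality $\Leftrightarrow$ (c) $\Leftrightarrow$ (a) $\Rightarrow$ (b), but not (b) $\Rightarrow$ equality. That is exactly the direction the paper uses when it passes from freeness to Betti sums, e.g.\ in \cref{prop_ef_hull}.

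What is missing is a way to recover $H^*(X)$ from a free $H^*_G(X)$. For $G=\Z_2^r$, pick an index-two subgroup $G'$ and write $R=R'[x_r]$, with $R'=H^*(BG';\F_2)$ and $x_r$ the linear form vanishing on $G'$. Then $X_{G'}\to X_G$ is a double cover with $w_1=x_r$, whose Gysin (transfer) sequence
\[ \cdots\to H^{n-1}_G(X)\xrightarrow{\;x_r\;}H^n_G(X)\longrightarrow H^n_{G'}(X)\longrightarrow H^n_G(X)\xrightarrow{\;x_r\;}H^{n+1}_G(X)\to\cdots \]
breaks into short exact sequences when $x_r$ is a nonzerodivisor. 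Hence $H^*_{G'}(X)\cong H^*_G(X)/x_rH^*_G(X)$ is free over $R'$. Induction down to the trivial subgroup gives $H^*(X)\cong\F_2\otimes_R H^*_G(X)$ via $i^*$, which is (a). For $T^r$ the same argument works with the Gysin sequence of the circle bundle $X_{T'}\to X_T$ (Euler class $x_r$ in degree $2$), or with the Eilenberg--Moore spectral sequence.

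A smaller issue: your twisted-coefficient lemma is stated confusingly and only half justified. Filtering $M$ by trivial composition factors and using exactness of localization gives $\operatorname{rank}_R H^*(G;M)\le\dim M$, hence the inequality. But a filtration of $M$ cannot give the \emph{strict} inequality for nontrivial $M$, which you need to deduce trivial action from equality. One clean way to reduce to your (correct) $r=1$ computation $\operatorname{rank}_{\F_2[x]}H^*(\Z_2;M)=\dim M-2\dim\operatorname{im}(1+g)$ is as follows. Suppose equality holds for $G$, take $1\neq g\in G$, and set $C=\langle g\rangle$. The inequality for $C$ acting on $X$, and for $G/C$ acting on $X^C$ (with $(X^C)^{G/C}=X^G$), gives $\dim H^*(X^G)\le\dim H^*(X^C)\le\dim H^*(X)$. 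So equality holds for $C$, which forces $g$ to act trivially on $H^*(X;\F_2)$. If you prefer not to check the standing hypotheses for $X^C$, the Lyndon--Hochschild--Serre spectral sequence of $G=G'\times C$, viewed as a spectral sequence of $R$-modules, gives the strict inequality purely algebraically. With these two repairs your argument becomes a genuine proof of what the paper takes from the literature.
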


\begin{proof}
See~\cite[Theorem~3.10.4]{AP} and~\cite[Theorem~10.2]{AFP} for the $G=\Z_2^r$ case, and~\cite[Theorem~8.3]{AFP} and \cref{SScollapse} for the $G=T^r$ case. 
\end{proof}

\begin{definition} \label{def_eq}
A $G$-space $X$ is called \emph{equivariantly formal over $\mathbb{F}_2$} 
if any of the equivalent conditions of Theorem~\ref{thm_bettisumcrit} hold.
\end{definition}

Suppose now that $\Z_2 = \langle \tau \rangle$ acts on a $T$-space $X$ and that this action is \emph{compatible} with the torus action in the sense that $\tau(g \cdot x) = g^{-1} \cdot \tau(x)$ for all $x \in X$ and $g \in T$. Such actions arise in algebraic and symplectic geometry, where examples include toric varieties and symplectic toric manifolds with anti-symplectic involutions; see, e.g.,~\cite{BGH}. Under this compatibility assumption, $\Z_2$ acts on $X^T$, and the maximal elementary abelian $2$-subgroup $T_2$ (consisting of all elements $g\in T$ with $g^2=1$) acts on $X^{\Z_2}$. We can now state the main result of this section.

\begin{theorem} \label{thm_compatiblecrit}
Assume that $X$ and $X^T$ are equivariantly formal as $\Z_2$-spaces over $\mathbb{F}_2$. Then $X$ is an equivariantly formal $T$-space over $\mathbb{F}_2$ if and only if $X^{\Z_2}$ is an equivariantly formal $T_2$-space over $\mathbb{F}_2$.  
\end{theorem}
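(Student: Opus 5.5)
We argue entirely through Betti sums, using \cref{thm_bettisumcrit} for the four group actions involved. We set $b(Y)=\dim_{\F_2}H^*(Y;\F_2)$. By \cref{thm_bettisumcrit}, $X$ is an equivariantly formal $T$-space exactly when $b(X^T)=b(X)$. Likewise, $X^{\Z_2}$ is an equivariantly formal $T_2$-space exactly when $b\bigl((X^{\Z_2})^{T_2}\bigr)=b(X^{\Z_2})$. The hypotheses give $b(X^{\Z_2})=b(X)$ and $b\bigl((X^T)^{\Z_2}\bigr)=b(X^T)$, again by \cref{thm_bettisumcrit}, now applied with $G=\Z_2$.

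The first step is to identify the fixed sets, namely to show
\[
(X^{\Z_2})^{T_2}=(X^T)^{\Z_2}.
\]
The inclusion $\supseteq$ is clear, since $T_2\subseteq T$. For $\subseteq$, take $x$ fixed by $\tau$ and by $T_2$. By the standing assumption the stabilizer $T_x$ is connected, so it is a subtorus of $T$. It contains $T_2$, and the only subtorus of $T$ containing all $2$-torsion is $T$ itself: the $2$-torsion of a subtorus $T'$ has rank $\rk T'$, so $T'\supseteq T_2$ forces $\rk T'=r$. Hence $x\in X^T$, and since $x$ is also $\tau$-fixed, $x\in (X^T)^{\Z_2}$. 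Compatibility ensures that these actions are well defined on the relevant fixed sets. I expect this step, which is where connectedness of stabilizers is used, to be the main point needing care.

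With this identification, the Betti sums chain together:
\[
b\bigl((X^{\Z_2})^{T_2}\bigr)=b\bigl((X^T)^{\Z_2}\bigr)=b(X^T)\leqslant b(X)=b(X^{\Z_2}),
\]
where the inequality is \cref{thm_bettisumcrit} applied to the $T$-action. The two ends agree exactly when $b(X^T)=b(X)$, that is, exactly when $X$ is $T$-equivariantly formal. The two ends agreeing is precisely $T_2$-equivariant formality of $X^{\Z_2}$, so the equivalence follows.

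Some technical points remain to be checked. The fixed sets $X^T$ and $X^{\Z_2}$ must still satisfy the standing hypotheses (compact, Hausdorff, second-countable, locally contractible), so that \cref{thm_bettisumcrit} applies to them. In the applications these spaces are finite complexes, and I would either assume this or note that closed invariant subsets of such spaces inherit the hypotheses. Finiteness of $b(X)$ is implicit in the hypotheses.
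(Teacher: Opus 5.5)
Your proposal is correct and follows essentially the same route as the paper: both identify the $T_2$-fixed points of $X^{\Z_2}$ with $(X^T)^{\Z_2}$, then combine the two Betti-sum equalities coming from the $\Z_2$-formality hypotheses with the inequality $b(X^T)\leqslant b(X)$, and conclude via \cref{thm_bettisumcrit}. The one addition is your explicit check of $(X^{\Z_2})^{T_2}=(X^T)^{\Z_2}$ using connected stabilizers (a subtorus containing $T_2$ must be all of $T$), a step the paper only asserts, in its notation $(X^T)^{\Z_2}=X^{T_2}$.
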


\begin{proof}
Since the $\Z_2$-actions on $X$ and $X^T$ are equivariantly formal and $(X^T)^{\Z_2} = X^{T_2}$, we have $\dim_{\F_2}H^*(X^{\Z_2};\F_2) = \dim_{\F_2}H^*(X;\F_2)$ and $\dim_{\F_2}H^*(X^{T_2};\F_2) = \dim_{\F_2}H^*(X^{T};\F_2)$ by \cref{thm_bettisumcrit}. We therefore have 
\[
\begin{array}{ccc}
\dim_{\F_2}H^*(X^T;\F_2) & \leqslant & \dim_{\F_2}H^*(X;\F_2) \\
\Vert &   & \Vert \\
\dim_{\F_2}H^*(X^{T_2};\F_2) & \leqslant & \dim_{\F_2}H^*(X^{\Z_2};\F_2).
\end{array}
\]
So $\dim_{\F_2}H^*(X^{T_2};\F_2) = \dim_{\F_2}H^*(X^{\Z_2};\F_2)$ if and only if $\dim_{\F_2}H^*(X^{T};\F_2) = \dim_{\F_2}H^*(X;\F_2)$, which completes the proof, again by \cref{thm_bettisumcrit}.
\end{proof}

\end{document}